\begin{document}
\title{Capacity Allocation in Queuing Systems with Preferred Service Completion Times}

\author{Bahar \c{C}avdar \thanks{bcavdar@tamu.edu} \and Tu\u{g}\c{c}e I\c{s}{\i}k \thanks{tisik@clemson.edu}}
\date{%
    $^*$\small{Department of Engineering Technology and Industrial Distribution, Texas A\&M University, \\College Station, TX, USA }\\%
    $^\dagger$Department of Industrial Engineering, Clemson University, Clemson, SC, USA\\[2ex]%
}

\maketitle

\begin{abstract}
{
Retailers use a variety of mechanisms to enable sales and delivery. A relatively new offering by companies is curbside pickup where customers purchase goods online, schedule a pickup time, and come to a pickup facility to receive their orders. To model this new service structure, we consider a queuing system where each arriving job has a preferred service completion time. Unlike most queuing systems, we make a strategic decision for when to serve each job based on their requested times and the associated costs. We assume that all jobs must be served before or on their requested time period, and the jobs are outsourced when the capacity is insufficient. Costs are incurred for jobs that are outsourced or served early. For small systems, we show that optimal capacity allocation policies are of threshold type. For general systems, we devise heuristic policies based on similar threshold structures. Our numerical study investigates the performance of the heuristics developed and shows the robustness of them with respect to several service parameters. Our results provide insights on how the optimal long-run average costs change based on the capacity of the system, the length of the planning horizon, cost parameters and the order pattern.

}
\end{abstract}
\textbf{Keywords:} Order pickup, Customer preferences,  Markov decision processes, Policy improvement
\section{Introduction}

Availability of online shopping options has drastically changed the retail of many consumer products from clothing to home appliances \citep{Chan2003}. Many traditionally brick and mortar retailers are now utilizing multiple channels to reach their customers \citep{Gallino2014}. Recent innovations in the retail industry has expanded the opportunities created by online stores to retail of perishable goods such as fresh produce and dairy products. Retailers like Walmart and Target now offer curbside pick up services. Walmart's online grocery pickup service is available in over 30 markets with more than 30,000 stock keeping units available for pickup.  Target recently launched its own pickup service application to be piloted in the twin cities area in Minnesota. In a similar vein, Amazon  has expanded its grocery retail services with two pilot pickup locations in the Seattle area. These innovations present a competitive alternative to another ongoing trend in the retail industry, namely same-day deliveries. On the other hand, companies are faced with new challenges due to the limitations in packaging and storage of highly perishable grocery items in the orders to be picked up. Walmart reports that 90\% of the online baskets contain fresh items like dairy, meat and produce \citep{Perez2016}. As a result, the planning and scheduling of order picking operations is critical for the success of these new services.

A typical grocery pickup service allows customers to go on a website, fill their virtual shopping carts with the items they would like to purchase and select a time slot for pick up before they check out. In all of the services mentioned above, the customer is presented with hourly time windows over the span of next few days. Depending on the capacity available at the location chosen for pickup, time slots that are too close to the time of purchase might become unavailable. At the selected pickup time, customers show up at the location they had selected and receive their packages. The service process described above has novel characteristics compared to traditional service schemes where customers are served on first-come-first serve basis or by appointment. First, customer requests need not be processed in the order they are received since each customer selects a preferred time slot for the completion of service (i.e., order pickup). Second, starting the service process as late as possible is advantageous to maintain the freshness of the perishable items purchased. In doing so, the company should keep any capacity restrictions in mind since the demand is unlikely to be distributed evenly over the time slots presented to the customers. Third, part of the service process can be completed in advance, since the company has more control over when to start the service compared to service operations where appointments are the norm.

Although the applications we consider here came to existence recently, a need for similar considerations may arise in other service systems where there is a gap between the time the service is requested and the time it is needed. Such service systems often operate through the scheduling of time windows to account for the preferences in the timing of service delivery. For instance, in delivery and maintenance services, companies may allow customers to select time windows rather than specific appointment times. These time constraints result in additional considerations of customer preferences and service quality for a given level of service capacity, leading to a higher level of complexity in operations planning. An overview of similar problems from the other domains is presented in the next section.

This study considers a single queue of service requests in a system where customers have preferred service completion times. We assume that the service requests are received and served in discrete time periods where there is a rolling planning horizon of $K$ periods. In each time period $t$, a number of new service requests with preferred completion times, that are within the next $K$ periods, are observed according to a stochastic arrival process. The service capacity is constant over all time periods and at most $M$ requests can be served in any given period. We also assume that late fulfillment of requests are not allowed, hence there are two options as to when a job can be served: either start the service earlier than the requested time or serve on time. In the first case, we incur an early service cost $c_e$ per period. This cost  can be considered as a reflection of customer dissatisfaction, decay in the quality of the service, or holding cost of finished goods. In the second case, if the capacity is enough to serve all the jobs that are due in a period, they can be served with no additional cost. Otherwise, some jobs are fulfilled via overtime or outsourcing and a cost of $c_o$ per job is incurred. Once the jobs are served, the remaining requests are placed in a queue to be served later.



The dimensionality of the problem described above rapidly increases with the number of time periods $K$ in the planning horizon and the maximum number of arrivals that can be observed in a period, which we denote by $A$. Since the problem size increases quickly, identifying the optimal policy is computationally challenging for instances of practical interest. Therefore, instead of searching for an optimal solution in the most general case, we propose a heuristic approach that finds the optimal solution for almost all instances in our extensive computational experiments. Our results provide insights on how the optimal policy changes under the following scenarios: (1) when the capacity of the system increases or decreases, (2) when the company offers different flexibilities to the customers in the form of different planning horizons, (3) when the early service and overtime costs change based on the nature of the problem. 

The structure of this paper is as follows. In Section 2, we present an overview of related literature. Then, we provide an MDP formulation for our capacity allocation problem in Section 3. In Section 4, we present our results on the structure of the optimal allocation policy. In Section 5, optimal capacity allocation policies are provided for small systems, and heuristic policies for larger systems are devised based on these results as well as the results provided in Section 4. We present our computational results evaluating the performance of the proposed heuristic policy in section 6. Finally in Section 7, a summary of our key results and insights are discussed.

\section{Literature Review}

Our work relates to several research problems related to dynamic capacity allocation, scheduling, and Markov decision processes. Dynamic resource allocation problems have been studied in a variety of contexts such as assembly lines, transportation, and service systems. There has been ample research in each of these areas where researchers investigated the effective use of resources in the face of demand uncertainty and associated costs. In a similar vein, our work focuses on the allocation of resources when the demand is random and capacity shortages are costly; however, it is unique in the sense that we assume jobs may arrive in the system earlier than their preferred time of service. 

In the literature, the idea that it might be preferable to postpone jobs until a certain time is often modeled using time windows, where a job can only be served after the beginning of a given time window and it needs to be completed before the end of the same time window. Consequently,  several papers considered scheduling of jobs with time windows on a single machine or multiple parallel machines. Hard time windows or earliness/tardiness constraints typically add to the complexity of scheduling problems, often making solutions intractable \citep{Gabrel1995}. As a result, many researchers chose to work with softer constraints where the undesirability of earliness or tardiness can be represented in terms of costs. \cite{Koulamas1996} considers the single machine scheduling problem with earliness and tardiness penalties and developed heuristic methods to sequence the jobs and optimally generate schedules given a predetermined sequence. \cite{Wan2002} focus on a similar single machine scheduling problem where jobs are to be served in distinct time windows. They propose a tabu search algorithm to minimize the total weighted earliness and tardiness. \cite{Monch2006} study the single machine scheduling problem minimizing earliness-tardiness with an upper limit on allowable total tardiness. Scheduling of a given set of jobs with distinct arrival times and distinct due dates on parallel machines in the presence of earliness and tardiness penalties is considered by \cite{Sivrikaya1999}. They develop and evaluate genetic algorithms for large problem instances. \cite{KedadSidhoum2008} study a similar problem and develop bounds on the optimal earliness-tardiness costs. In all of these works, it is assumed that the jobs that are to be scheduled are known in advance and a single, finite-horizon schedule sought. This is in contrast to dynamically allocating resources as jobs arrive while taking the desired timing of service into account as we aim to do in this paper.


There has been little work on how to handle jobs with preferred service start time in queuing systems with random arrivals. \cite{Thiagarajan2005} study a setting that resembles the problem structure we consider the most. They consider a dynamic assembly job shop scheduling problem where jobs arrive following a Poisson process and the objective is to serve jobs as closely as possible to their due dates in order to avoid earliness costs that are due to accumulation of inventory and any penalties that are due to missing due dates. For this system, they evaluate the performance of a several dispatching rules via a simulation study. However, they do not provide any insights regarding the structure of the optimal scheduling policies, and their results do not include any comparisons with the optimal dispatching rules. Moreover, many papers that involve \emph{dynamic scheduling} focus on adjusting/repairing existing schedules as random events that relate to jobs or resources occur instead of scheduling jobs as they arrive or as time passes (for examples, see the review by \cite{Ouelhadj2009}). A few papers focused on \emph{completely reactive} scheduling mechanisms that use real-time dispatching rules to decide which job to process at an available machine. \cite{Holthaus2000} provide an overview of the existing dispatching rules. Based on these rules, they derive two new approaches to minimize maximum tardiness.  \cite{Aytug2005} provide a comprehensive summary of the existing research on reactive scheduling. Previous studies conclude that real-time dispatching rules are often myopic and fail to consider all the information related to the current state of the system such as arrival distribution and overtime probabilities. Thus, our work is distinct in that we are interested in allocating resources dynamically with the objective of optimizing long-run performance as the system state evolves.

In summary, the problem we investigate and the proposed solutions are different than the previous work in three major ways: (i) We provide a new, and to our knowledge, first MDP formulation for the capacity allocation problem in queues with earliness and tardiness costs. This formulation allows for extensions with respect to multiple arrival types and cost structures. (ii) Our model uses information available on the arrival process to inform capacity allocation decisions throughout the planning horizon. (iii) We provide structural results on the optimal allocation policies, propose a heuristic approach that finds near-optimal solutions, and provide a sensitivity analysis on both the resulting optimality gap and computational performance.

\section{Problem Formulation}

\label{Sec:ProbDef}

In our problem setting, customers arrive at each discrete time period with a service request to be fulfilled either in the current or a future period. That is, a customer arriving at time $t$ can request to be served at $t+j$, where $0\leq j < K$ where $K$ is the length of the planning horizon. Each service request can be completed within one period, thus all servers are available at the beginning of each period.  Once a request is made, there is no cancellation. 

Our goal is to allocate the available capacity to the service requests waiting in the queue to maximize the long run average profit. Since we assume that customers do not leave the queue and are ultimately served, any revenue collected is independent of the allocation decisions and can be ignored. Therefore, we define our objective as the minimization of the long-run average total cost. 

The nature of this problem lends itself to be modeled as discrete-time infinite-horizon Markov decision process (MDP). In our MDP formulation, the decision epochs are immediately after customer requests arrive at each discrete time period. Since each service request can be completed in the same period it starts, the number of available servers at the beginning of a period is always $M$. Therefore, it is enough to track the customer requests in the queue at time $t$ to determine the state of the system. Let $X^{t}=x$ denote the state of the system at time $t$ and $x=\{x_{j}: 0\leq j < K\}$, where $x_{j}$ is the number of jobs in the queue that are due to be served $j$ periods later. We denote the set of all possible states by $\mathcal{X}$. 

We assume that, in each period $t$, number of customer arrivals with preferred service completions in period $t+j$ follows a known distribution with arrival rate $\lambda_j$ and these arrival processes are independent of each other for all $j$.  We further define the following quantities to specify the customer preferences for service delivery times over the planning horizon $K$:

\begin{itemize}
\item $\lambda$: overall arrival rate of customers.
\item $q_{j}$: probability that a customer arriving at $t$ requests a service starting at $t+j$, 
\begin{equation}
\lambda_{j}=q_{j}\lambda \quad \forall j \in\{0, \dots, K-1\}\label{Eq:lamb}
\end{equation}

\item $a_{j}^{t}$: number of customer arrivals at time $t$ requesting a service completion at time $t+j$.
\item $p_{j}(a)$: probability of observing $a$ customers who request a service completion time that is $j$ periods ahead.
\end{itemize}

Note that if $\lambda_{j}$'s are equal for all $j$'s, then customers preferences are evenly distributed over the periods $j \in\{0, \dots, K-1\}$ . However, in some cases customers may have a stronger preference for certain periods in the planning horizon. Such behavior can be captured by using different rate $\lambda_{j}$ for each $j$. 

In each period, the decision maker determines which jobs to serve from the queue. This decision can also include strategically idling the servers to avoid early service costs. Note that all jobs that are due the current period must be served even if the capacity is not enough. Early service is suboptimal unless there is remaining capacity after serving $x_{0}$ jobs, because we would be incurring both early service cost and overtime cost unnecessarily otherwise. We denote an action by $y=\{y_{j} :  0 \leq j < K \}$, where $y_{j}$ is the number of customers who requested to be served at $t+j$ but served at time $t$. Notice that we must have $y_{j}\leq x_{j}$ $\forall j$, hence the possible actions we can take are limited by the possible configurations of the customers in the queue. Therefore, the action space is identical to $\mathcal{X}$. The transition probabilities can be computed as follows:

\begin{equation}
\begin{array}{lr}
 P[X^{t+1}=x'| X^{t}=x, y] =P(\sigma_{K-1}^{t+1})\ \prod_{j=0}^{K-2} P(\omega_{j}^{t+1}) \label{Eq:form}
\end{array}
\end{equation}

where $\omega_{j}^{t+1}$ is the event $a_{j}^{t+1}=x_{j}'-(x_{ j+1}-y_{j+1})$ $\forall t$ and $0\leq j<K-1$, and $\sigma_{K-1}^{t+1}$ is the event  $a_{K-1}^{t+1}=x_{K-1}'$. The following example can provide a clearer understanding of the transition between the states.

\textbf{Example:} Consider a system with $K=3$ and suppose the system is at state $X^{t}=\{2, 0, 3\}$ at time $t$. There are two customers who want their service to start in the current period and three customers who want their service to start at $t+2$. Assume that we take action $y=\{2, 0, 1\}$. In the beginning of the next period, the queue has $\{0, 2, 0\}$ jobs before the arrivals are observed. So, the probability of reaching state $X^{t+1}=\{4, 2, 0\}$ is the probability of observing four arrivals requesting a service in period $t$, zero arrivals requesting a service in period $t+1$ and zero arrivals requesting a service in period $t+2$. Therefore we have 
$$P(X^{t+1}=\{4, 2, 0\}| X^{t}=\{2, 0, 3\}, y=\{2, 0, 1\})=P(a^{t+1}_{0}=4)P(a^{t+1}_{1}=0)P(a^{t+1}_{2}=0).$$ 

On the other hand, the probability of reaching the state $X^{t+1}=\{4, 0, 0\}$ is zero.

The total cost that is incurred in each period is the sum of the overtime and the early service costs. Note that the overtime and early service costs  cannot be incurred in the same period. We denote the cost incurred in a period $t$ by $u(x, y)$, where $x$ is the current state and $y=\{y_{j}: 0\leq j <K\}$ is the action taken. This cost is computed as 

\begin{equation}
u(x,y)=c_{o}(y_{0} - M)^{+}+c_{e}\sum_{j=1}^{K-1}jy_{j}.
\end{equation}

The probability of observing no customer arrivals is positive and non-diminishing for all periods in the time horizon. Thus, the state with $x_j=0$, $\forall j$ where there are no customers in the queue can be reached from any other state in finitely many steps with positive probability, and the MDP is unichain. Therefore, to find the policy that minimizes the long-run average cost we can solve the following optimality equation:

\begin{equation}
0=\min_{y\in\mathcal{X}}\bigg{\{}u(x,y)-g+\sum_ {x^{'}\in \mathcal{X}}P[x^{'}| x, y]h(x^{'})-h(x)\bigg{\}} \quad \forall x\in \mathcal{X}, \label{Eq:opt}
\end{equation}

where $g$ denotes the long-run average cost and $h(\cdot)$ is the bias \citep{puterman2014markov}. 

The curse of dimensionality due to large state and action spaces is an issue as in many other stochastic optimization problems with long planning horizons. To work with a finite state space, we truncate the arrival distributions so that in every period $t$, at most $A$ arrivals that are due in $t+j$ can be observed. Given these truncated distributions, the number of states and also the actions is $\Pi_{j=1}^{K}(jA+1)$, which still grows exponentially in the length of the planning horizon. This makes standard methods such as Policy Iteration computationally burdensome and impractical. Therefore, our efforts are in developing methods that can find near-optimal capacity allocation policies quickly.

\section{Monotonicity of the Optimal Policy}

Due to the multi-dimensionality of the state space and large number of allowable actions, the optimal server allocation policies for general systems is analytically intractable. In this section, we explore the properties of the optimal server allocation policies and show that the optimal policy is of threshold type for systems with short planning horizons. 
 
Consider a system with $K=2$ periods in the planning horizon, at most $A$ customer requests for $t+j$ observed in period $t$, and service capacity $M$. We define the dynamic programming value function as follows. Let $V_n(x)$ denote the minimal expected total cost if there are $n$ more periods left, where $x=\{x_0, x_1\}$ shows the number of customers in the system who want to be served in the current or the next period. Then the Bellman  equation associated with the value function is given as
 \begin{align}
 	V_n(x)=c_o(x_0-M)^++\min_{y_1\in Y_x}\{c_e y_1+\sum_{a_0\in\cal{A}}\sum_{a_1\in\cal{A}}p_0(a_0)p_1(a_1)V_{n-1}(a_0+x_1-y_1,a_1)\}
 	\label{Eq:value}
 \end{align}
 where $Y_x=\{0,1,\dots,\min\{x_1,(M-x_0)^+\}$ and $p_j(a_j)$'s $j\in\{0,\dots, K-1\}$ are the arrival probabilities for $a_j\in\cal{A}$=$\{0,1,\dots A\}$.
 
We would like to show that the optimal action $y_1^*$ is non-decreasing in the number of jobs that are due the next period. We define the following \emph{operators} for the value function $V_n(x)$:
 \begin{align*}
 T_o f(x)&=c_o(x_0-M)^++f(x)\\
 T_{es} f(x)&=\min_{y_1\in Y_x}\{c_e y_1+\sum_{a_0\in\cal{A}}\sum_{a_1\in\cal{A}}p_0(a_0)p_1(a_1)f(a_0+x_1-y_1,a_1)\}
 \end{align*}
where $T_o$ is defined to be the \emph{overtime operator} and $T_{es}$ is defined to be the \emph{early service operator}. Then, it suffices to show that these operators are increasing and componentwise convex to prove monotonicity of the optimal policy. The following two lemmas proofs of which are given in Appendix \ref{Sec:Proof} state these properties for $T_o$ and $T_{es}$.

\newtheorem{Lemma}{Lemma}
\begin{Lemma}
	\label{Lem:To}
For any function $f(x)$ that is increasing and  componentwise convex; the overtime operator $T_o$ preserves these properties. That is, $T_o f(x)$ is also increasing and componentwise convex.
\end{Lemma}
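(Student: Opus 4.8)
The plan is to verify the two claimed properties of $T_o f(x) = c_o(x_0 - M)^+ + f(x)$ directly, using the fact that the function $x \mapsto c_o(x_0 - M)^+$ is itself increasing in each component (it is nondecreasing in $x_0$ and constant in $x_1$) and componentwise convex (it is a convex function of $x_0$ since $c_o \geq 0$, and constant hence convex in $x_1$). Since $T_o f$ is the sum of $f$ and this simple function, and since both monotonicity and componentwise convexity are preserved under addition, the result will follow once these two elementary facts about $c_o(x_0-M)^+$ are recorded.

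First I would show monotonicity. Fix any component and increase it by one unit while holding the others fixed. For the $x_1$ component, $c_o(x_0-M)^+$ does not change, so the difference in $T_o f$ equals the difference in $f$, which is nonnegative by assumption. For the $x_0$ component, $(x_0-M)^+$ is nondecreasing, so $c_o(x_0-M)^+$ is nondecreasing since $c_o \geq 0$, and adding the nonnegative increment of $f$ keeps the total nonnegative. Hence $T_o f$ is increasing in each component.

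Next I would show componentwise convexity. Fix one component, say $x_0$, and view $T_o f$ as a one-dimensional function of $x_0$ with $x_1$ fixed: it is $c_o(x_0-M)^+ + f(x_0, x_1)$. The map $x_0 \mapsto (x_0 - M)^+$ is convex (it is the maximum of the two affine functions $0$ and $x_0 - M$), so scaling by $c_o \geq 0$ keeps it convex; adding $f(\cdot, x_1)$, which is convex in $x_0$ by hypothesis, preserves convexity, since the sum of convex functions is convex. For the $x_1$ component, $c_o(x_0-M)^+$ is constant in $x_1$, so $T_o f$ restricted to the $x_1$ direction is just $f(x_0, \cdot)$ up to an additive constant, which is convex by hypothesis. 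Thus $T_o f$ is componentwise convex.

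The only subtlety worth flagging is that the convexity statements here are for functions on the integer lattice, so "convex" should be read in the discrete sense, namely that second differences are nonnegative; with that reading the argument above is unchanged, since discrete convexity is also closed under nonnegative scaling and addition, and $x_0 \mapsto (x_0-M)^+$ has nonnegative second differences. There is no real obstacle in this lemma; it is a routine bookkeeping step whose purpose is to isolate the easy half of the monotonicity argument, leaving the genuine work for the companion lemma on $T_{es}$.
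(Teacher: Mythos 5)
Your proposal is correct and follows essentially the same route as the paper's proof: both reduce the lemma to the observations that $c_o(x_0-M)^+$ is nondecreasing in $x_0$, constant in $x_1$, and satisfies the discrete convexity inequality $2(x_0+1-M)^+\leq (x_0-M)^+ + (x_0+2-M)^+$, after which monotonicity and componentwise convexity of $T_o f$ follow by adding these properties of $f$. The only cosmetic difference is that you package the argument as closure of these properties under addition, whereas the paper writes out the resulting inequalities explicitly.
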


\begin{Lemma}
	\label{Lem:Tes}
For any function $f(x)$ that is increasing and  componentwise convex; the early service operator $T_{es}$ preserves these properties. That is, $T_{es} f(x)$ is also increasing and componentwise convex.
\end{Lemma}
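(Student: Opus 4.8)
The plan is to rewrite the inner minimization of $T_{es}$ in terms of the number of next-period jobs \emph{left in queue}, $w:=x_1-y_1$, and then study the resulting one-parameter minimization of a fixed convex function over a state-dependent interval. As $y_1$ ranges over $Y_x=\{0,1,\dots,\min\{x_1,(M-x_0)^+\}\}$, the variable $w$ ranges over the integer interval $[\ell(x),x_1]$ with $\ell(x):=(x_1-(M-x_0)^+)^+$, so that
\[
T_{es}f(x)=c_e x_1+\min_{\ell(x)\le w\le x_1}G_f(w),\qquad G_f(w):=-c_e w+\E_{a_0,a_1}\big[f(a_0+w,a_1)\big].
\]
I would first record the facts that do the real lifting: (i) averaging over the independent arrival counts $a_0,a_1$ preserves ``increasing'' and ``componentwise convex'', so $\phi(w):=\E_{a_0,a_1}[f(a_0+w,a_1)]$ is nondecreasing and discretely convex in $w$; and (ii) hence $G_f=-c_e w+\phi(w)$ is discretely convex, i.e.\ unimodal with a valley at a threshold $w^\star=w^\star(f)$ that does \emph{not} depend on $x$ ($G_f$ nonincreasing on $\{0,\dots,w^\star\}$, nondecreasing afterwards). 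Consequently $\min_{\ell(x)\le w\le x_1}G_f(w)=G_f\big(\mathrm{med}(\ell(x),w^\star,x_1)\big)$, equivalently $G_f$ evaluated at $w^\star$ clipped to the interval $[\ell(x),x_1]$.

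With this in hand the two monotonicity statements and convexity in $x_1$ are routine. For monotonicity in $x_0$ (with $x_1$ fixed): $\ell(x)$ is nondecreasing in $x_0$ while the right endpoint $x_1$ is unchanged, so the feasible set for $w$ only shrinks and the minimum of the fixed function $G_f$ cannot decrease; since the $c_ex_1$ term is unchanged, $T_{es}f$ is nondecreasing in $x_0$. For monotonicity in $x_1$ (with $x_0$ fixed): take a minimizer $w^+$ for $(x_0,x_1+1)$; if $w^+\le x_1$ it is also feasible at $(x_0,x_1)$ (because $\ell$ is nondecreasing in $x_1$), giving $T_{es}f(x_0,x_1)\le c_ex_1+G_f(w^+)=T_{es}f(x_0,x_1+1)-c_e$, and the only remaining case $w^+=x_1+1$ is handled by comparing with $w=x_1$ and using $G_f(x_1)-G_f(x_1+1)=c_e-(\phi(x_1+1)-\phi(x_1))\le c_e$. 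For convexity in $x_1$ one substitutes the closed form $T_{es}f(x_0,x_1)=c_ex_1+G_f(\mathrm{med}(\ell(x),w^\star,x_1))$ and splits on the position of $x_1$ relative to $w^\star$ and to $(M-x_0)^+$: on the low range the expression collapses to $\phi(x_1)$ (convex), on the middle range it is affine in $x_1$ with slope $c_e$, and on the high range it is a shifted copy of $\phi$ again; continuity at the breakpoints together with $\phi$'s increments being below $c_e$ before $w^\star$ and at least $c_e$ after makes the global sequence of increments nondecreasing.

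The hard part — where I would spend most of the effort — is componentwise convexity in $x_0$. Here $T_{es}f(x_0,x_1)=c_ex_1+G_f(\max(\ell(x),w^\star))$ (and it is even constant in $x_0$ when $x_1\le w^\star$), and the obstruction is that $\ell(x_0,x_1)=(x_1-(M-x_0)^+)^+$ is \emph{not} a convex function of $x_0$: as $x_0$ grows it is flat at $0$, then rises with unit slope over the window $M-x_1\le x_0\le M$, then is flat at $x_1$, so its discrete second difference is negative exactly at the capacity level $x_0=M$ — precisely where the action set $Y_x$ stops contracting. A naive composition with $G_f$ is therefore genuinely concave there for steep enough $f$, so the statement does not seem provable for $T_{es}$ in complete isolation from the rest of the model. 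I expect the intended route to be one of: (a) prove convexity in the $x_0$-direction for the composed one-step map $T_o\circ T_{es}$ actually used in $V_n=T_oT_{es}V_{n-1}$, since the overtime term $c_o(x_0-M)^+$ supplied by $T_o$ (Lemma~\ref{Lem:To}) contributes exactly a positive second difference at $x_0=M$; or (b) restrict the argument to the feasibility-relevant region $x_0\le M$, on which $\ell(\cdot,x_1)$ is ``flat then unit-slope'' and its composition with the increasing convex $G_f$ is convex; possibly in combination with carrying a strengthened inductive invariant through the recursion (for instance, that the $x_0$-increments of $V_n$ are controlled by $c_o$, which would bound the second-difference deficit at $x_0=M$). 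Nailing down this behavior at the capacity boundary, and propagating whatever extra hypothesis it forces through both operators, is the crux; the rest is bookkeeping of the kind sketched above.
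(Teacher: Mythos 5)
Your reformulation in terms of $w=x_1-y_1$ is sound, and the parts you actually carry out --- monotonicity in each coordinate and componentwise convexity in $x_1$ --- are correct, and arguably cleaner than the paper's own argument, which establishes the same facts by tracking how the minimizer shifts as the feasible set $Y_x$ gains or loses a single element. More importantly, the obstruction you flag in the $x_0$-direction is real and is not a defect of your method. Take $M=1$, $c_e=1$, $x_1=1$, and $f(x_0,x_1)=Cx_0$ with $C>1$; this $f$ is increasing and componentwise convex. At state $(0,1)$ the set $Y_x=\{0,1\}$ and serving early gives $T_{es}f(0,1)=1+C\,\E[a_0]$, while at $(1,1)$ and $(2,1)$ the set $Y_x=\{0\}$ forces $T_{es}f=C(1+\E[a_0])$; the second difference in $x_0$ is therefore $1-C<0$. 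So Lemma~\ref{Lem:Tes} is false as stated for $T_{es}$ in isolation, for exactly the reason you identify: the endpoint $\min\{x_1,(M-x_0)^+\}$ of $Y_x$ stops contracting at the capacity boundary, so the lower endpoint $\ell(x_0,x_1)$ of your $w$-interval has a concave kink at $x_0=M$, and composing it with the nondecreasing $G_f$ can produce a strictly negative second difference there.

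The paper's proof does not catch this because its case analysis for \eqref{Eq:comconvx0} is incomplete: it admits only the patterns (a) $y_1^{1*}=y_1^{3*}=y_1^{4*}$, (b) $y_1^{1*}=y_1^{3*}=y_1^{4*}+1$, and (c) $y_1^{1*}=y_1^{3*}+1=y_1^{4*}+2$, omitting the pattern $y_1^{1*}=y_1^{3*}+1=y_1^{4*}+1$, which arises precisely when the triple $(x_0,x_0+1,x_0+2)$ straddles the capacity level $M$ (in the counterexample, $y_1^{1*}=1$ and $y_1^{3*}=y_1^{4*}=0$), and which is the only pattern in which the convexity inequality can fail. So your decision to stop and look for a repaired statement is the right one. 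Of your two suggested repairs, (a) is the one that closes the argument: the composed operator $T_o\circ T_{es}$ used in the recursion $V_n=T_oT_{es}V_{n-1}$ contributes a second difference of $+c_o$ at $x_0=M$ through the overtime term of Lemma~\ref{Lem:To}, while the deficit from $T_{es}$ at that point is at most $\E[f(a_0+x_1,a_1)]-\E[f(a_0+x_1-1,a_1)]-c_e$; this is covered provided you also carry the strengthened inductive invariant that the increments of $V_n$ in its first argument are bounded by $c_o$ (serve the extra due-now job in overtime and mimic the optimal policy for the smaller state). You would still need to check that both operators preserve that increment bound, which is the bookkeeping you anticipate; without some such strengthening neither your argument nor the paper's establishes componentwise convexity in $x_0$.
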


Now we are ready to state our main result on the structure of the optimal capacity allocation policy in systems with short planning horizons. 

\newtheorem{Theorem}{Theorem}
\begin{Theorem}\label{main}
In a system with a planning horizon of $K=2$ periods and arbitrary service capacity, early service cost and overtime cost; the optimal number of jobs to be served early is monotone in the number of jobs that are due the next period, and thus the optimal capacity allocation policy is of threshold type. 
\end{Theorem}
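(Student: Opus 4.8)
The plan is to lift Lemmas~\ref{Lem:To} and~\ref{Lem:Tes} to the fixed point of the optimality equation and then invoke a monotone comparative statics argument. First I would write the finite-horizon recursion \eqref{Eq:value} in operator form as $V_n = T_o T_{es} V_{n-1}$ with $V_0\equiv 0$. Since $V_0$ is (weakly) increasing and componentwise convex, Lemmas~\ref{Lem:To} and~\ref{Lem:Tes} give, by induction on $n$, that every $V_n$ is increasing and componentwise convex. To reach the long-run average cost criterion I would then pass to the limit: because the MDP is unichain on a finite (truncated) state space, the normalized value functions $V_n - V_n(x^\circ)$, for a fixed reference state $x^\circ$, converge to a bias function $h$ solving \eqref{Eq:opt} (in the aperiodic case; otherwise one uses a Ces\`aro limit, or argues directly that the average-cost optimality operator preserves the two properties so its fixed point inherits them). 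As monotonicity and componentwise convexity are preserved under pointwise limits, $h$ is increasing and componentwise convex.

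With convexity of $h$ available I would analyze the minimizing action. Fix $x_0$ and, for the state $x=\{x_0,x_1\}$, set
\[
G(x_1,y_1)=c_e y_1+\sum_{a_0\in\mathcal A}\sum_{a_1\in\mathcal A}p_0(a_0)p_1(a_1)\,h\!\left(a_0+x_1-y_1,\,a_1\right),
\]
so that the optimal early-service quantity is $y_1^*(x_0,x_1)\in\arg\min_{y_1\in Y_x}G(x_1,y_1)$ with $Y_x=\{0,1,\dots,\min\{x_1,(M-x_0)^+\}\}$. The point is that $G$ depends on $(x_1,y_1)$ only through $y_1$ and the difference $x_1-y_1$; writing $H(w)=\sum_{a_0,a_1}p_0(a_0)p_1(a_1)h(a_0+w,a_1)$, which is convex because $h$ is componentwise convex, one gets $G(x_1,y_1)=c_e y_1+H(x_1-y_1)$ and hence the discrete cross difference
\[
G(x_1{+}1,y_1{+}1)-G(x_1{+}1,y_1)-G(x_1,y_1{+}1)+G(x_1,y_1)=-\bigl(H(w{+}1)-2H(w)+H(w{-}1)\bigr)\le 0,
\]
where $w=x_1-y_1$. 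Thus $G$ is submodular in $(x_1,y_1)$ on the integer lattice. Since in addition the feasible set $Y_x$ is ascending in $x_1$ in the strong set order (its right endpoint $\min\{x_1,(M-x_0)^+\}$ is nondecreasing in $x_1$), Topkis' monotonicity theorem for the minimization of a submodular objective over an ascending constraint set implies that the smallest, and the largest, element of $\arg\min_{y_1\in Y_x}G(x_1,y_1)$ is nondecreasing in $x_1$. This is exactly the claimed monotonicity of the optimal early-service quantity, and it yields the threshold structure immediately: for each $x_0$ there is a critical queue length $\tau(x_0)$ such that it is optimal to idle (serve nothing early) when $x_1\le\tau(x_0)$ and to serve a positive, nondecreasing number of jobs early when $x_1>\tau(x_0)$.

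The hard part will be two technical points rather than the submodularity computation, which is routine once $h$ is componentwise convex. The first is the transition from the finite-horizon value functions to the average-cost bias $h$: one must invoke (or reprove in this setting) the appropriate convergence result for unichain average-cost MDPs so that the structural properties are genuinely inherited by the fixed point of \eqref{Eq:opt}, and not merely by each $V_n$. The second is the careful treatment of the state- and capacity-dependent action set $Y_x$ in the comparative statics step: one has to verify that the correspondence $x_1\mapsto Y_x$ is monotone in the strong set order and handle the boundary regime where the capacity bound $(M-x_0)^+$ binds rather than $x_1$, so that the conclusion of Topkis' theorem applies throughout the state space. All the remaining work is already encapsulated in Lemmas~\ref{Lem:To} and~\ref{Lem:Tes}.
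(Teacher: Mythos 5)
Your proposal is correct and follows essentially the same route as the paper: both establish that $V_n = T_o \circ T_{es} V_{n-1}$ preserves monotonicity and componentwise convexity by induction via Lemmas~\ref{Lem:To} and~\ref{Lem:Tes}, pass to the long-run average case, and then deduce monotonicity of the minimizer from the convexity of the averaged value function $W_{n-1}$ (your $H$). The only difference is one of packaging — where you verify submodularity of $c_e y_1 + H(x_1-y_1)$ and invoke Topkis over the ascending set $Y_x$, the paper proves the same monotone-selection fact by a direct contradiction argument whose key step is exactly the increasing-differences inequality your cross-difference computation yields.
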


\begin{proof}
 The dynamic programming value function $V_n(x)$ can be written as $T_o \circ T_{es}V_{n-1}(x)$. Thus, using the boundary condition $V_0(x)=0$ and induction, it follows that $V_n(x)$ is componentwise convex for every $n$ due to Lemmas 1 and 2. Then, the long-run average optimal value function is also componentwise convex \citep{Koole2007}.
 
 Recall that $V_n(x)=c_o(x_0-M)^{+} + \min_{y_1\in Y_x}\{c_{e}y_1+\sum_{a_0\in\cal{A}}\sum_{a_1\in\cal{A}} p_0(a_0)p_1(a_1) V_{n-1}(a_0+x_1-y_1,a_1)\}$. We define $W_{n-1}(x_1)=\sum_{a_0\in\cal{A}}\sum_{a_1\in\cal{A}} p_0(a_0)p_1(a_1) V_{n-1}(a_0+x_1-y_1,a_1)$. Then the value function can be rewritten as
\begin{equation}
\begin{split}
V_n(x)=c_o(x_0-M)^{+}+\min\bigg{\{}&W_{n-1}(x_1), c_{e}+W_{n-1}(x_1-1), 2c_{e}+W_{n-1}(x_1-2), \dots,\\ &\min\{x_1,(M-x_0)^+\}c_{e}+W_{n-1}(x_1-\min\{x_1,(M-x_0)^+\})\bigg{\}}.
\end{split} \nonumber
\end{equation}

Suppose $y_1^*$ is the smallest optimal action in some state $x=\{x_0,x_1\}$. Then, the following inequalities hold.
\begin{equation}
y_1^*c_e+W_{n-1}(x_1-y_1^*)< y_1c_e+W_{n-1}(x_1-y_1) \quad \forall  y_1 <y_1^*,
\end{equation}
\begin{equation}
y_1^*c_e+W_{n-1}(x_1-y_1^*) \leq y_1c_e+W_{n-1}(x_1-y_1) \quad \forall  y_1>y_1^*.
\end{equation}

Thus, we have
\begin{equation}
(y_1^*-y_1)c_e < W_{n-1}(x_1-y_1)-W_{n-1}(x_1-y_1^*) \quad \forall  y_1 <y_1^*, \label{ineq1}
\end{equation}
\begin{equation}
(y_1^*-y_1)c_e \leq W_{n-1}(x_1-y_1)-W_{n-1}(x_1-y_1^*) \quad \forall  y_1 >y_1^*.
\end{equation}

Now, also assume that there exists a state $x=\{x_0,x_1'\}$ with $x_1'>x_1$ where an action $y_1^{**}<y_1^*$ is optimal. This implies that the following inequalities also must hold:
\begin{equation}
(y_1^{**}-y_1)c_e \leq W_{n-1}(x_1'-y_1)-W_{n-1}(x_1'-y_1^{**}) \quad \forall  y_1 <y_1^{**},
\end{equation}
\begin{equation}
(y_1^{**}-y_1)c_e \leq W_{n-1}(x_1'-y_1)-W_{n-1}(x_1'-y_1^{**}) \quad \forall  y_1 >y_1^{**}.\label{ineq2}
\end{equation}

Since $y_1^{**}<y_1^*$, the following must be true:
\begin{equation}
(y_1^*-y_1^{**})c_e < W_{n-1}(x_1-y_1^{**})-W_{n-1}(x_1-y_1^{*}), \label{contr1}
\end{equation}
\begin{equation}
(y_1^{**}-y_1^*)c_e \leq W_{n-1}(x_1'-y_1^*)-W_{n-1}(x_1'-y_1^{**}) \label{contr2}
\end{equation}

due to \eqref{ineq1} and \eqref{ineq2} respectively. The inequalities \eqref{contr1} and \eqref{contr2} lead to a contradiction since we must have $W_{n-1}(x_1-y_1^{**})-W_{n-1}(x_1-y_1^*)\leq W_{n-1}(x_1'-y_1^{**})-W_{n-1}(x_1'-y_1^*) $ due to the componentwise convexity of $W_{n-1}(x_1)$. Thus the optimal action $y_1^*$ must be non-decreasing in $x_1$, and the optimal capacity allocation policy is of threshold type. 
\end{proof}

Even though the result given in Theorem \ref{main} is limited to systems with smaller planning horizons, monotonicity of the optimal policy provides valuable insights to tackle the high dimensionality of the problem. Next, we devise heuristic capacity allocation policies for systems with longer planning horizons using this structural result.

\section{Development of Heuristic Policies: Optimal Policy for Smaller Systems }

Due to the large size of the state and action spaces in our problem, algorithms such as Policy Iteration require large-scale computational resources and cannot be used efficiently to obtain optimal solutions. We propose a two-stage heuristic approach which is aimed to resolve the trade-off between solution quality and computation time. The first stage yields an initial policy and the second stage improves it. The policies that result from either stage can be used as heuristic based on the computation time requirements and the desired solution quality. In the second stage, we use \emph{one-step policy improvement,} a common technique to generate well-performing policies that can be computed quickly. One-step improvement heuristics have been widely used in many research domains such as appointment scheduling problems, routing in parallel queues, and organ allocation. The following are a few examples from the literature. \cite{Liu2010} use a one-step improvement heuristic to dynamically compute appointment schedules in the face of patient no-shows and cancellations. \cite{Feldman2014} use a similar method to develop appointment scheduling heuristics that take patient preferences into account. \cite{Opp2004} use dynamic programming policy improvement, among other methods, to develop index policies to minimize the cost of routing of warranty repairs to one of many available vendors. Heuristic methods for routing customers in a network of parallel service stations while minimizing waiting costs is developed using policy improvement by  \cite{TanikArgon2009}. Lastly, \cite{Zenios2000} use policy improvement to develop approximate solutions to the dynamic allocation problem for kidney transplants.

The decision rules used by our heuristics are derived using the threshold structure of the optimal policies. Understanding the relationships between the thresholds is easier when the planning horizon is short, especially when there are two periods. The decision for customers who want their service to start in the current period is obvious; they must be served immediately. If there is remaining capacity after these customers are served, one should decide whether additional customers should be served early or not. Based on our results in the previous section, we consider the following class of threshold policies to determine when to serve early. For a system with $K=2$, the threshold policy $S=(0, s_{1})$ serves all jobs that are due in the current period, and uses any remaining capacity to serve the jobs that are due in the next period as long as there are more than $s_{1}$ of them. For example, the policy $S= (0, 0)$ serves early whenever there is remaining capacity, while $S=(0, A)$ never serves early. The number of possible threshold policies is equal to $A+1$. The subsequent proposition provides the optimal policy of this type for a small but useful instance.

\newtheorem{Proposition}{Proposition}
\begin{Proposition}
The threshold policy $S^{*}$ given below is optimal for general cost parameters in a small system where $M=1$, $K=2$ and $A=2$.

\begin{equation}
\label{Eq:TH}
S^{*}=\begin{cases}
    (0, 0) \quad \textrm{if } c_{e}\frac{1+p_{0}(0)-p_{0}(0)p_{0}(1)-p_{0}^{2}(0)}{1-p_{0}^{2}(0)-p_{0}(0)p_{0}(1)} \leq c_{o},\\
    (0, 1) \quad \textrm{if }   c_{e} \leq c_{o} <   c_{e}\frac{1+p_{0}(0)-p_{0}(0)p_{0}(1)-p_{0}^{2}(0)}{1-p_{0}^{2}(0)-p_{0}(0)p_{0}(1)}  , \\
   (0, 2) \quad \textrm{if } c_{o} < c_{e}.\\
      \end{cases}
    \end{equation} 
    \label{Pro:opt}
\end{Proposition}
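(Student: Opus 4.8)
The plan is to evaluate the long-run average cost of each of the only three threshold policies $(0,0)$, $(0,1)$, $(0,2)$ in closed form and then read off which is cheapest as a function of $c_e$ and $c_o$. First I would note that with $M=1$ the decision at every epoch is essentially binary: all jobs due in the current period must be served, so the single degree of freedom is whether to serve one additional job early, and this is only feasible in the two states $\{0,1\}$ and $\{0,2\}$. Hence there are only four stationary deterministic policies, and by Theorem~\ref{main} the optimal one is monotone in $x_1$, so it is one of the $A+1=3$ threshold policies $S=(0,s_1)$, $s_1\in\{0,1,2\}$ (the fourth, which serves early at $x_1=1$ but not at $x_1=2$, is ruled out by monotonicity). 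Since the MDP is unichain, each such policy has a well-defined average cost $g(S)$ equal to the stationary expectation of the one-period cost, and it remains to compare $g(0,0)$, $g(0,1)$, $g(0,2)$.

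To compute $g(S)$ I would exploit that $K=2$ collapses the state: the jobs requested for the next period at time $t$ are exactly the fresh arrivals $a_1^t$, and the only quantity carried into the following period is $z^t:=x_1^t-y_1^t\in\{0,1,2\}$, with $x_0^{t+1}=z^t+a_0^{t+1}$. Thus under $S=(0,s_1)$ an early service occurs at $t+1$ precisely when $z^t=0$, $a_0^{t+1}=0$, and $a_1^{t+1}>s_1$, which makes $(z^t)$ a Markov chain whose transition out of $z=0$ is a mixture (with weights $p_0(0)$ and $1-p_0(0)$) of the kernel obtained by serving one job early and the plain arrival law, while the transition out of $z\ge1$ is always the plain arrival law. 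The single balance equation for $\pi_0:=P(z=0)$ then yields $\pi_0=p_0(0)$ for $s_1\in\{1,2\}$ and $\pi_0=p_0(0)/(1-p_0(0)p_0(1))$ for $s_1=0$. Writing the one-period cost $c_o(x_0-1)^{+}+c_ey_1$ in stationarity, using $(m-1)^{+}=m-\mathbf 1\{m\ge1\}$ and the independence of $z^{t-1}$ from the fresh arrivals $a_0^{t},a_1^{t}$, collapses $g(S)$ to a policy-independent constant plus $\pi_0\,p_0(0)\big(c_o\,P(a_1\le s_1)+c_e\,P(a_1>s_1)\big)$, so only this last term matters for the comparison.

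Finally I would carry out the pairwise comparison. Substituting the three values of $s_1$ together with the matching $\pi_0$, one obtains $g(0,1)\le g(0,2)$ iff $c_o\ge c_e$, and $g(0,0)\le g(0,1)$ iff $c_o\ge c_e\,\rho$ where $\rho=\frac{1+p_0(0)-p_0(0)p_0(1)-p_0^2(0)}{1-p_0^2(0)-p_0(0)p_0(1)}$. Here $\rho\ge1$ because its numerator minus its denominator equals $p_0(0)\ge0$ while its denominator $1-p_0(0)\big(p_0(0)+p_0(1)\big)\ge 1-p_0(0)>0$ (excluding the trivial case $p_0(0)=1$, in which no early-service decision ever arises and all three policies coincide). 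Because $\rho\ge1$, the two cut points $c_o/c_e=1$ and $c_o/c_e=\rho$ are nested, so along the three rays $c_o/c_e<1$, $1\le c_o/c_e<\rho$, and $c_o/c_e\ge\rho$ the three cost values are totally ordered by transitivity, and the cheapest is $(0,2)$, $(0,1)$, $(0,0)$ respectively --- exactly the claimed $S^{*}$.

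I expect the bulk of the work, and the only place errors are likely to creep in, to be the two middle steps: correctly identifying the reduced chain on $z$, solving for its stationary distribution, and collapsing the average cost to the closed form above; care is also needed for the degenerate parameter values (e.g.\ $p_0(0)=0$ or $P(a_1=2)=0$) where two or more of the $g(S)$ coincide and one must check that the stated tie-breaking still selects an optimal policy. The comparison itself is routine once the $g(S)$ are explicit; the only subtlety there is verifying $\rho\ge1$ so that the case analysis on $c_o/c_e$ is exhaustive.
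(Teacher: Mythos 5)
Your argument is sound and reaches the paper's formula, but it takes a genuinely different route. The paper's (omitted) proof verifies that the stated policy satisfies the average-cost optimality equation \eqref{Eq:opt} directly, which certifies global optimality without any appeal to structure; you instead invoke Theorem \ref{main} to cut the four stationary deterministic policies down to the three thresholds, then evaluate each policy's long-run average cost in closed form and compare. Your route has the advantage of producing the explicit gains $g(0,s_1)=\mathrm{const}+\pi_0\,p_0(0)\bigl(c_o P(a_1\le s_1)+c_e P(a_1>s_1)\bigr)$ and of exposing where the ratio $\rho$ comes from (it is essentially the comparison of the overtime risk against the sure early-service cost, weighted by how often the opportunity recurs); the price is that it is only valid as a proof of optimality because Theorem \ref{main} guarantees the optimum lies in the threshold class, a dependence you correctly acknowledge. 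The reduction to the carried-over count $z^t=x_1^t-y_1^t$, the identity $(m-1)^+=m-\mathbf{1}\{m\ge 1\}$, the independence argument, and the nesting of the two cut points via $\rho\ge 1$ are all correct.

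One point deserves a flag: since $x_1^t=a_1^t$ consists of fresh arrivals requesting next-period service, the stationary probability $P(z=0)$ under the never-serve-early and $s_1=1$ policies is $p_1(0)$, not $p_0(0)$, and for $s_1=0$ it is $p_1(0)/(1-p_0(0)p_1(1))$. Carrying this through, the pairwise comparison of $g(0,0)$ and $g(0,1)$ yields the cut point $c_e\,\frac{1+p_0(0)-p_0(0)p_1(0)-p_0(0)p_1(1)}{1-p_0(0)p_1(0)-p_0(0)p_1(1)}$, which reduces to the expression in the Proposition exactly when the two arrival streams are identically distributed ($p_0=p_1$, e.g.\ the equal-load case). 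Your substitution of $p_0$ for $p_1$ in the stationary distribution of $z$ is what makes your final formula coincide with the paper's; it is a conflation the Proposition's own statement appears to share, so it does not separate your conclusion from the paper's, but in a self-contained write-up you should either state the assumption $p_0=p_1$ or keep the $p_1$ terms.
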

It is easy to show that the policy in Proposition \ref{Pro:opt} solves the optimality equation in (\ref{Eq:opt}). Therefore, we omit the proof for brevity.

The following proposition shows that if the early service cost is large enough, then the optimal policy never allows early service.

\begin{Proposition}
When $c_{o}\leq c_{e}$, regardless of the number of servers $M$ and the planning horizon $K$, the optimal threshold policy $S^{*}$ is always of the form $(0, A(K-1), A(K-2), \dots, A)$ (i.e., never serves early). \label{Pro:noserve}
\end{Proposition}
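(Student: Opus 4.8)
The plan is to prove the stronger statement that, when $c_o \le c_e$, serving no job early is optimal among \emph{all} admissible policies (adaptive or not) — from which optimality within the (smaller) class of threshold policies is immediate — and to do so \emph{pathwise}, so that no appeal to the optimality equation is needed. First I would fix an arbitrary sample path of the arrival process. Any admissible policy, run on this path, assigns to each job $\ell$ a service period $\tau_\ell$ with $a_\ell \le \tau_\ell \le d_\ell$, where $a_\ell$ and $d_\ell$ denote the arrival and due periods of $\ell$ (late service is forbidden, so $\tau_\ell \le d_\ell$). Conversely, the ``never serve early'' policy $\pi_0=(0,A(K-1),\dots,A)$ is admissible — its thresholds can never be exceeded, since at most $A(K-j)$ jobs ever occupy queue position $j$ — and it realizes $\tau_\ell\equiv d_\ell$. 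The next step is to express the cost accrued over the path in terms of the induced schedule: aggregating the per-period cost $c_o(y_0-M)^+ + c_e\sum_{j} j y_j$ over the path, the early-service cost totals $c_e\sum_\ell(d_\ell-\tau_\ell)$, and the overtime cost in period $s$ equals $c_o(x_0^{(s)}-M)^+$ with $x_0^{(s)}=\#\{\ell:\ d_\ell=s,\ \tau_\ell=s\}$, the number of jobs due in period $s$ that were not served early (here one uses that, late service being forbidden, $y_0=x_0^{(s)}$). Writing $D_s=\#\{\ell:\ d_\ell=s\}$ for the policy-independent number of jobs due in period $s$, the cost under $\pi_0$ is exactly $c_o\sum_s(D_s-M)^+$.

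It then remains to verify that, for the schedule $\tau$ induced by any admissible policy,
\[
c_o \sum_s\big[(D_s-M)^+-(x_0^{(s)}-M)^+\big]\ \le\ c_e\sum_\ell(d_\ell-\tau_\ell),
\]
i.e. the overtime saved by serving jobs early never outweighs the early-service cost incurred. I would establish this with two elementary bounds. On the left: since $D_s\ge x_0^{(s)}$ and $z\mapsto(z-M)^+$ is nondecreasing and $1$-Lipschitz, each bracket is at most $D_s-x_0^{(s)}=e_s$, the number of jobs due in period $s$ that were served early; summing, the left side is at most $c_o\sum_s e_s=c_o\cdot\#\{\ell:\ \tau_\ell<d_\ell\}$. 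On the right: every job with $\tau_\ell<d_\ell$ contributes $d_\ell-\tau_\ell\ge 1$, so the right side is at least $c_e\cdot\#\{\ell:\ \tau_\ell<d_\ell\}$. The hypothesis $c_o\le c_e$ closes the gap. Hence the total cost of every admissible policy on every sample path is at least that of $\pi_0$ — up to a boundary term (jobs due just past the end of the horizon) that is bounded uniformly in the horizon length and therefore washes out after dividing by it — so $\pi_0$ minimizes the long-run average cost, and in particular it is the optimal threshold policy, as claimed.

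The argument is essentially self-contained, so the only place that needs care is the bookkeeping that turns a policy's behavior into the schedule $(\tau_\ell)$ and the verification that the per-period cost formula aggregates exactly to $c_o\sum_s(x_0^{(s)}-M)^+ + c_e\sum_\ell(d_\ell-\tau_\ell)$; this is where one must be careful to recognize that $x_0^{(s)}$ can be strictly smaller than $D_s$ (because a policy may indeed have served some period-$s$ jobs early), which is precisely the mechanism by which early service buys reduced overtime, and the inequality above quantifies that this trade is never favorable. An alternative, more ``MDP-style'' route would be to prove by induction on the horizon that $0\le V_n(x)-V_n(x-e_j)\le c_o$ for every queue position $j$ — serving a job early removes one unit from position $j$ — and then read off from the Bellman recursion that the term $c_e\sum_j j y_j$ can never be recouped; but carrying the two-sided Lipschitz bound through the $K$-dimensional transition is more delicate than the pathwise accounting, so I would present the pathwise proof.
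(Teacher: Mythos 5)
Your proof is correct, and it is a genuinely more careful route than the one the paper takes. The paper argues job by job, in expectation: serving a job $j$ periods early costs $jc_e$ with certainty, while deferring it to its due date incurs overtime cost $c_o$ only with some probability $p$, and $pc_o\le c_o\le c_e\le jc_e$; this is the right economic comparison but it leaves implicit why the marginal effect of one deferred job on the overtime bill is at most $c_o$ irrespective of what the other jobs do. You close exactly that gap: your pathwise accounting, together with the observation that $z\mapsto(z-M)^+$ is nondecreasing and $1$-Lipschitz, shows that the \emph{aggregate} overtime saved by a policy is at most $c_o$ per early-served job while the aggregate early-service cost is at least $c_e$ per early-served job, so the trade is never favorable when $c_o\le c_e$. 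This buys you a strictly stronger conclusion as well -- dominance of the never-serve-early policy on every sample path and over all admissible policies, not merely optimality within the threshold class -- at the price of some bookkeeping (the identification $y_0=x_0^{(s)}$, the decomposition of the cumulative cost into $c_o\sum_s(x_0^{(s)}-M)^+ + c_e\sum_\ell(d_\ell-\tau_\ell)$, and the edge effects at the end of a finite window), all of which you handle correctly. The one cosmetic remark is that the Lipschitz step is doing the real work that the paper's probability-$p$ heuristic papers over; it would be worth stating that inequality, $(D_s-M)^+-(x_0^{(s)}-M)^+\le D_s-x_0^{(s)}$, explicitly as the crux of the argument.
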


\begin{proof}
Consider a general system with $M$ servers and $K$ periods. If we serve a job $j$ periods ahead of the requested service time, the incurred early service cost is $jc_{e}$ for that job. However, if we keep the job in the queue until the requested service time, this job will be served in overtime if the capacity is exceeded, which will occur with some probability $p$. The expected overtime cost to serve this customer is $pc_{o}$, which is never greater than $jc_{e}$. Therefore, whenever $c_{o} \leq c_{e}$, the optimal action is never serving early. 
\end{proof}

Proposition \ref{Pro:noserve} determines the optimal threshold when $c_{o}\leq c_{e}$. However, we need to determine the optimal thresholds by solving the optimality equation (\ref{Eq:opt}) whenever this is not the case. Based on Propositions \ref{Pro:opt} and \ref{Pro:noserve}, the following Corollary directly follows.

\newtheorem{Corollary}{Corollary}
\begin{Corollary}
The optimal threshold policy in systems with $M=1$, $K=2$, and any maximum number of arrivals $A$ is as follows: 

\begin{equation}
S^{*}=\begin{cases}
    (0, 0) \quad \textrm{if } c_{e}\frac{1+p_{0}(0)-p_{0}(0)p_{0}(1)-p_{0}^{2}(0)}{1-p_{0}^{2}(0)-p_{0}(0)p_{0}(1)} \leq c_{o},\\
    (0, 1) \quad \textrm{if }   c_{e} \leq c_{o} <  c_{e}\frac{1+p_{0}(0)-p_{0}(0)p_{0}(1)-p_{0}^{2}(0)}{1-p_{0}^{2}(0)-p_{0}(0)p_{0}(1)}  , \\
   (0, A) \quad \textrm{if } c_{o} < c_{e}.\\
      \end{cases}
      \label{Eq:OptT}
    \end{equation} 
\end{Corollary}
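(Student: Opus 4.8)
The plan is to split on the sign of $c_o-c_e$ and to assemble the statement from Proposition~\ref{Pro:noserve}, Theorem~\ref{main}, and the kind of verification already used for Proposition~\ref{Pro:opt}. The case $c_o<c_e$ is immediate: Proposition~\ref{Pro:noserve} says that whenever $c_o\le c_e$ the optimal threshold policy never serves early, for any $M$ and $K$; and when $K=2$, ``never serving early'' is exactly the policy $(0,A)$, because the number $x_1$ of jobs due next period can never exceed $A$. This gives the third line of \eqref{Eq:OptT}, so the remaining work is the regime $c_o\ge c_e$.

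For $c_o\ge c_e$ I would first invoke Theorem~\ref{main} to restrict attention to threshold policies $(0,s_1)$ with $s_1\in\{0,\dots,A\}$, and then show that in fact $s_1\le 1$. The key observation is that with $M=1$ at most one job can be served early per period, and that at every state $\{0,x_1\}$ with $x_1\ge 2$ serving one job early is weakly optimal. To see this, couple the arrival stream and compare the policy that serves one job early at such a state (then continues optimally) with the one that serves none (then continues optimally). Because $x_1\ge 2$, both continuations enter the next period with at least one job due, so neither can serve early there, both incur only the forced overtime, and both land in the same state one period later; from then on the two trajectories coincide. The accumulated cost difference is therefore $\big(c_e+c_o(a_0+x_1-2)\big)-c_o(a_0+x_1-1)=c_e-c_o\le 0$, so serving early is weakly optimal and $s_1\le 1$.

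It then remains only to choose between $(0,0)$ and $(0,1)$, and these two policies differ only in the action taken at the single state $\{0,1\}$. I would settle this exactly as in Proposition~\ref{Pro:opt}: check which of the two candidates satisfies the average-cost optimality equation \eqref{Eq:opt}, equivalently, run a one-step policy-improvement test at $\{0,1\}$ using the bias $h$ of $(0,1)$. The fact that makes this tractable is that, under $(0,1)$, states $\{b,c\}$ and $\{b+1,c\}$ with $b\ge 1$ move to the same distribution of successor states, so the first differences $h(\{b,c\})-h(\{b+1,c\})$ that enter the test are constant (equal to $-c_o$ for $b\ge1$, with a simple form for $b=0$) and do not depend on $A$. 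Carrying this through yields the break-even relation $c_o=c_e\,\frac{1+p_0(0)-p_0(0)p_0(1)-p_0^{2}(0)}{1-p_0^{2}(0)-p_0(0)p_0(1)}$, with $(0,0)$ optimal above it and $(0,1)$ optimal between $c_e$ and it; for $A=1$ the policies $(0,1)$ and $(0,A)$ coincide and the statement degenerates consistently.

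The main obstacle I anticipate is this last step. Unlike the states $\{0,x_1\}$ with $x_1\ge 2$, where not serving early forces an overtime and the coupled trajectories re-synchronize after a single period, at $\{0,1\}$ the policy $(0,0)$ empties the queue and keeps receiving fresh early-service opportunities that $(0,1)$ does not, so the comparison genuinely involves the stationary behavior of $(0,1)$ rather than a bounded-horizon coupling. One therefore has to extract a workable description of its bias (or of the expected cost over a regeneration cycle) without solving the full, $A$-dependent Poisson system, and then verify that the arithmetic collapses to the stated closed form for every $A$. The coupling argument of the second paragraph and the $c_o<c_e$ reduction are, by contrast, routine.
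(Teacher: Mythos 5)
Your proposal is correct in structure and, in fact, supplies more of an argument than the paper does: the paper states that the corollary ``directly follows'' from Proposition~\ref{Pro:opt} (whose own verification is omitted) and Proposition~\ref{Pro:noserve}, and never explains why the $A=2$ analysis transfers to arbitrary $A$. Your treatment of the case $c_o<c_e$ via Proposition~\ref{Pro:noserve} coincides with the paper's. The genuinely new ingredient is the exchange/coupling argument for $c_o\ge c_e$: because $M=1$ forces the continuation for exactly one period after either action at a state $\{0,x_1\}$ with $x_1\ge 2$, the two trajectories re-synchronize after one step and the cost difference is the deterministic quantity $c_e-c_o\le 0$; this cleanly reduces the candidate set to $\{(0,0),(0,1)\}$ for every $A$ and is precisely the justification the paper skips when it promotes the $A=2$ result to general $A$. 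What your route buys is this $A$-independence for free; what the paper's route buys is brevity, at the cost of a logical gap. The one place where your proposal, like the paper, stops short is the final verification at the single undecided state $\{0,1\}$, which you describe but do not execute. Two cautions there: first, your claim that the first differences of the bias of $(0,1)$ in the first coordinate equal $c_o$ is correct only for states with first component at least one, and the $b=0$ term is exactly where all the work lives; second, when you expand that term, the opportunity-cost correction on the event $a_0=0$ is driven by whether the policy $(0,1)$ would have used the freed-up slot for an early service next period, which is governed by the distribution of arrivals due two periods ahead (the $p_1(\cdot)$ probabilities), so you should verify explicitly that the arithmetic collapses to the stated expression written purely in $p_0(0)$ and $p_0(1)$ (it does immediately when the arrival rates are equal across the two periods, and this is worth flagging rather than assuming).
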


The term $\phi=c_{o} \frac{1-p_{0}^{2}(0)-p_{0}(0)p_{0}(1)}{1+p_{0}(0)-p_{0}(0)p_{0}(1)-p_{0}^{2}(0)}$ can be interpreted as a measure of risk of incurring overtime cost for a job if we do not serve it early. Whenever $c_{o}\phi$ is higher than the for-sure cost in case of an early service, i.e., $c_{e}$, we serve the request early if the remaining capacity allows. 

As the number of servers and the length of the planning horizon increase, the structure of the optimal thresholds become too complex to be expressed in closed form. A simple heuristic for systems with longer planning horizons (i.e., $K>2$) can be devised by extending the threshold structure given in \eqref{Eq:OptT} in a rolling mechanism. Let $S=(s_{0}, s_{1}, \dots, s_{K-1})$ be a threshold policy for some $K$. For every $(j-1,j)$ pair where $1\leq j <K$, we compute the threshold $s_j$ sequentially as:

\begin{equation}
s_{j}=\begin{cases}
0 \quad \textrm{if } c_{e}\frac{1+p_{j-1}(0)-p_{j-1}(0)p_{j-1}(1)-p_{j-1}^{2}(0)}{1-p_{j-1}^{2}(0)-p_{j-1}(0)p_{j-1}(1)} \leq c_{o},\\
1 \quad \textrm{if }   c_{e} \leq c_{o} <   c_{e}\frac{1+p_{j-1}(0)-p_{j-1}(0)p_{j-1}(1)-p_{j-1}^{2}(0)}{1-p_{j-1}^{2}(0)-p_{j-1}(0)p_{j-1}(1)} ,\\
A \quad \textrm{if } c_{o} < c_{e}.\\
\end{cases}
\label{Eq:GenOptT}
\end{equation} 

Since we do not allow late service completions, we must serve all $x_0$ customers with requests due in the current period. Therefore, the threshold for $j=0$ is $s_0=0$, and $y_0=x_0$. Given a set of thresholds $s_j, j\in\{1,\dots,K-1\}$, the heuristic policy chooses actions  
\begin{equation*}
y_{j}=\min\{\max\{x_{j}-s_{j}^{*}, 0\}, M- \sum_{i<j}y_{j}\}
\end{equation*}

 for all $j$. In particular, after serving the excess customers who requested a service completion at $t+j$, the remaining capacity is decreased by $y_{j}$, and the process continues until we make a decision for all $j\in\{1,\dots,K-1\}$ or we run out of capacity.

The heuristic described above can be implemented for an arbitrary number of servers. Our preliminary computational results have shown that while the heuristic performs very well when $M$=1, it does not perform as well for larger number of servers. To address this, instead of the threshold policy in (\ref{Eq:OptT}), we use a local optimal threshold policy that takes service capacity into account when $M\geq 2$. We modify our heuristic as follows: given a problem instance, we consider an auxiliary problem with $K=2$ for each pair of time periods $t+j-1$ and $t+j$, $j\in\{1,\dots,K-1\}$, where all other parameters are identical to those of the original instance. For each new problem, we compute the average gain $g_{s_1}$ for decreasing thresholds $s_1$, where $(0,s_1)\in \{(0, A)$, $(0, A-1),\dots\}$ and we pick the largest threshold value $s_1^*$ that satisfies $g_{s_1^*}<g_{s_1^*-1}$. These thresholds are extended for the true planning horizon in the same way described earlier. We call the resulting policy the \emph{threshold heuristic} (TH). Considering the tradeoff between the solution quality and the computation time, we devise a second heuristic policy which we call \emph{threshold heuristic with 1-step improvement} (TH1S). TH1S determines an initial policy in the same way as TH, and performs a one-step policy improvement. The pseudo code of TH is given in Appendix \ref{App:heuristic}.

Since the heuristics are developed based on the optimality conditions of small systems, we know that they are optimal for the single-server system with two periods in the planning horizon. However, we do not have a theoretical guarantee for the solution quality in general systems. To observe the practical performance of our heuristics, computational experiments on many different scenarios were performed. In the next section, we present and discuss these experimental results.

\section{Computational Results}

In this section, we compare our heuristics with the optimal policy and the two other benchmark policies. These two policies, unlike our heuristics, do not make use of the problem structure. The first benchmark heuristic only serves the requests from the queue which are due in the current period, and never serves early. We call it \emph{do-nothing} (DN) policy. The second benchmark heuristic uses the do-nothing policy as the initial policy and performs one-step policy improvement. This heuristic is named \emph{do-nothing 1-step policy improvement} (DN1S). Do-nothing policy is also used as the initial policy to find the optimal solution using the policy iteration algorithm \citep{puterman2014markov}.

The scenarios we use to evaluate our heuristics are generated by varying the length of planning horizon $K$, number or servers $M$, arrival rate $\lambda$ and the maximum number of arrivals $A$. We also experiment on different customer order behavior. As briefly mentioned in Section \ref{Sec:ProbDef}, in some systems customers may be more likely to request service completion times that are early in the planning horizon. Such cases can occur if customers have shorter planning horizons than the service provider. In other cases, customers may be more likely to request a service completion that is late in the planning horizon, they may be equally likely to request any of the future time periods, or the timing of the requests might follow an arbitrary distribution. We describe the customer order behavior as the \emph{load} of the system. Different loads in the system can be captured by using a different arrival rates $\lambda_{j}$ for each period $t+j$. We formally define the four different load patterns we use, and compute the corresponding $q_{j}$ probabilities below.
\begin{enumerate}
\item \emph{Equal load systems (EL)}: Customers arriving at time $t$ are equally likely to request a service for any $t+j$, where $0\leq j<K$. 
\begin{equation}
q_{j}=\frac{1}{K},  \quad \forall j
\end{equation}

\item \emph{Front-loaded systems (FL)}: Customers arriving at time $t$ are more likely to request a service for $t+j_{1}$ than $t+j_{2}$ whenever $j_{1}< j_{2}$. 

\begin{equation}
q_{j}= \frac{(K-j)^{2}}{\sum_{i=1}^{K}i^{2}},  \quad \forall j
\end{equation}

\item \emph{Back-loaded systems (BL)}: Customers arriving at time $t$ are more likely to request a service for $t+j_{1}$ than $t+j_{2}$ whenever $j_{1}> j_{2}$. 

\begin{equation}
q_{j}=\frac{(j+1)^{2}}{\sum_{i=1}^{K}i^{2}},  \quad \forall j
\end{equation}

For the front-loaded and the back-loaded systems, we assume a quadratic change in $q_{j}$'s with respect to $j$. Corresponding $q_{j}$'s can be computed easily if the load over the planning horizon is changing at a different rate.

\item \emph{Arbitrarily-loaded systems (AL)}: Customer requests follow an arbitrary distribution over the planning horizon. The probabilities $q_{j}$ are randomly generated so that $\sum_{j=0}^{K-1}q_{j}=1$ and $q_{j}>0$ for all $j$.
\end{enumerate}

For the computational experiments, a plausible choice of the distribution for the number of arrivals observed is the truncated Poisson distribution. Once $q_{j}$'s are computed and the arrival rates $\lambda_j=\lambda q_j$ are determined for an overall arrival rate $\lambda$, truncated Poisson arrival probabilities can be computed according to the following equation:

\begin{equation}
p_{j}(a)=\frac{\frac{exp(\lambda_{j})\lambda_{j}^{a}}{a!}}{\sum_{i=0}^{A}\frac{exp(\lambda_{j})\lambda_{j}^{i}}{i!}},\quad \forall j, \forall a
\end{equation}

We created our test scenarios by considering different values of the problem parameters: number of servers $M$, general arrival rate $\lambda$, maximum number of arrivals $A$, length of the planning horizon $K$, the cost ratio $c_{e}/c_{o}$ and the load of the system. For the sake of brevity, we only present the results that reveal important relations between the performance of the heuristics and the problem parameters. Parameter values in the experiments are given below.

\begin{table}[H]
\begin{center}
\begin{tabular}{r c l}
\hline
 $M$ &$\in$&   $\{1, 5\}$\\
 $A$ & $\in$ &      $\{1,	2,	3,	5,	10\}$\\
 $\lambda$ &=&$0.2A$\\
$K$ &$\in$   & $\{3, 4, 5\}$\\
$c_{o}$   &=&   $20$\\
$c_{e}$ &$\in$   & $\{5, 10\}$\\
\hline 
\end{tabular}
\end{center}
\end{table}

Some of these parameter combinations result in very large state spaces that exceed our computational power. Therefore, such combinations are omitted. All algorithms are coded in C programming language and computations are performed on Xeon E5520 machines. 

We present the optimal solution (Opt) and the heuristic solutions found by do-nothing policy (DN), do-nothing one-step policy improvement (DN1S), our threshold heuristic (TH) and our threshold heuristic with one-step policy improvement (TH1S). For each instance the optimal policy is computed using the policy iteration algorithm. In Tables \ref{Tab:load}--\ref{Tab:K5ce10}, we show the long-run average cost, running time (in seconds) and average percentage deviation from the optimal solution for each heuristic. When a heuristic finds the optimal solution, it is shown in bold in the tables. Notice that the run times include the computations of both the policy and its long-run average cost. The computation times for DN and TH are approximately how long it takes to evaluate the long-run average cost of a policy. 

The factors that affect the performance of these five solution methods are the load of the system, the length of the planning horizon, the number of servers, the ratio between the early service cost and the overtime cost and the maximum number of arrivals in a period. Therefore, we will organize our discussions based on these factors. 

To discuss how the performance of each algorithm changes based on the load of the system, we present the results corresponding to a system where $M$=1, $K$=4, $c_{e}$=5, $c_{o}$=20 with different load distributions in Table \ref{Tab:load}. For the same results,  we also present Figure \ref{Fig:load} to show the deviation of the heuristic solutions from the optimal in terms of the long-run average cost. Looking at Figure \ref{Fig:load}, the most striking observation is the change in the performance of the DN heuristic with respect to different loads of the system. In front-loaded systems, it is likely the that a large proportion of arriving requests will need to be served in the near future. Therefore, the DN heuristic that only serves the customer requests that are due the current period does not deviate from the optimal solution much especially when the number of servers is small. After serving the necessary jobs, which are due the current period, there is either no or very small service capacity left, which limits the suboptimal decisions that can be made by the DN heuristic. However, the situation is the opposite in the back-loaded systems. In a back-loaded system, a large number of requests are likely to arrive to be served later in the planning horizon. Therefore, after serving these jobs, most of the time there is still some capacity that may need to be allocated for early-service decisions. Since the DN heuristic is myopic, it cannot foresee the orders with approaching due dates which may result in overtime work later. When we have equal-load, the performance of DN is in between. These observations applies to other heuristics as well but at a smaller scale.

Regarding the computation time, TH1S and DN1S use approximately the same computation time since they both perform one-step policy improvement. TH and DN computation times are shorter and very close to each other since we only evaluate the initial policy. In this setting, we observe that our TH heuristic performs the same as DN1S heuristic in terms of the solution quality. Therefore, considering the tradeoff between the solution quality and the computation time, our TH and TH1S heuristics provide alternative solutions. More importantly our TH1S heuristic finds the optimum in all instances we considered in a significantly shorter time than the policy iteration algorithm.

\begin{table}[H]
\caption{Computational results on $M$=1, $K$=4, $c_{e}$=5, $c_{o}$=20.}
\label{Tab:load}
\begin{center}
\scalefont{0.8}
\begin{tabular}{l|l|rr|rr|rr|rr| rr} 
\cmidrule[1pt]{3-12}                  
\multicolumn{2}{c|}{}&\multicolumn{2}{c|}{\textbf{Opt}}&\multicolumn{2}{c|}{\textbf{DN}}&\multicolumn{2}{c|}{\textbf{DN1S}}&\multicolumn{2}{c|}{\textbf{TH}}&\multicolumn{2}{c}{\textbf{TH1S}}\\\midrule
&A & Cost & t(sec)& Cost & t(sec)& Cost & t(sec)& Cost & t(sec)& Cost & t(sec) \\\midrule\midrule
\multirow{3}{3mm}{\begin{sideways}{EL}\end{sideways}}
&1	&	0.18	&	0.02	&	0.26	&	0.00	&	0.19	&	0.01	&	0.19	&	0.00	&\textbf{0.18}	&	0.01	\\
&2	&	0.98	&	7.55	&	1.38	&	2.47	&	1.01	&	5.11	&	1.01	&	2.48	&\textbf{	0.98}	&	4.98	\\
&3	&	2.27	&	562.46	&	2.97	&	140.06	&	2.30	&	280.89	&	2.30	&	137.62	&	\textbf{2.27}	&	275.69	\\ \hline
\multirow{3}{3mm}{\begin{sideways}{FL}\end{sideways}}
&1	&	0.18	&	0.02	&	0.21	&	0.00	&\textbf{0.18}	&	0.01	&\textbf{0.18}	&	0.00	&\textbf{	0.18}	&	0.01	\\
&2	&	1.18	&	10.28	&	1.33	&	2.54	&\textbf{	1.18}	&	5.00	&\textbf{	1.18	}&	2.48	&\textbf{1.18} 	&	4.99	\\
&3	&	2.57	&	421.51	&	2.95	&	140.08	&	\textbf{2.57	}&	275.87	&\textbf{2.57	}&	140.53	&	\textbf{2.57}	&	281.01	\\\hline
\multirow{3}{3mm}{\begin{sideways}{BL}\end{sideways}}
&1	&	0.09	&	0.02	&	0.21	&	0.00	&	0.10	&	0.01	&	0.10	&	0.00	&	\textbf{0.09}	&	0.01	\\
&2	&	0.67	&	7.57	&	1.33	&	2.48	&	0.79	&	4.97	&	0.79	&	2.55	&\textbf{0.67}	&	5.11	\\
&3	&	1.78	&	421.21	&	2.95	&	139.99	&	1.92	&	274.71	&	1.92	&	137.27	&	\textbf{1.78	}&	274.63	\\\hline
\multirow{3}{3mm}{\begin{sideways}{AL}\end{sideways}}
&1	&	0.19	&	0.02	&	0.26	&	0.00	&\textbf{0.19}	&	0.01	&	\textbf{0.19	}&	0.00	&\textbf{	0.19}	&	0.01	\\
&2	&	1.01	&	7.58	&	1.37	&	2.48	&	1.05	&	5.11	&	1.05	&	2.48	&	\textbf{1.01	}&	4.99	\\
&3	&	2.31	&	553.96	&	2.97	&	138.81	&	2.37	&	275.44	&	2.37	&	137.09	&\textbf{2.31}	&	273.91	\\
 \hline\hline
\multicolumn{4}{c|}{\textbf{Average Deviation}} & 47\% & &5\% & &5\%& &0\% & \\
\hline\hline
\end{tabular}
\end{center}
\normalsize
\end{table}


\begin{figure}[H]
\centering
\includegraphics[width=0.85\textwidth]{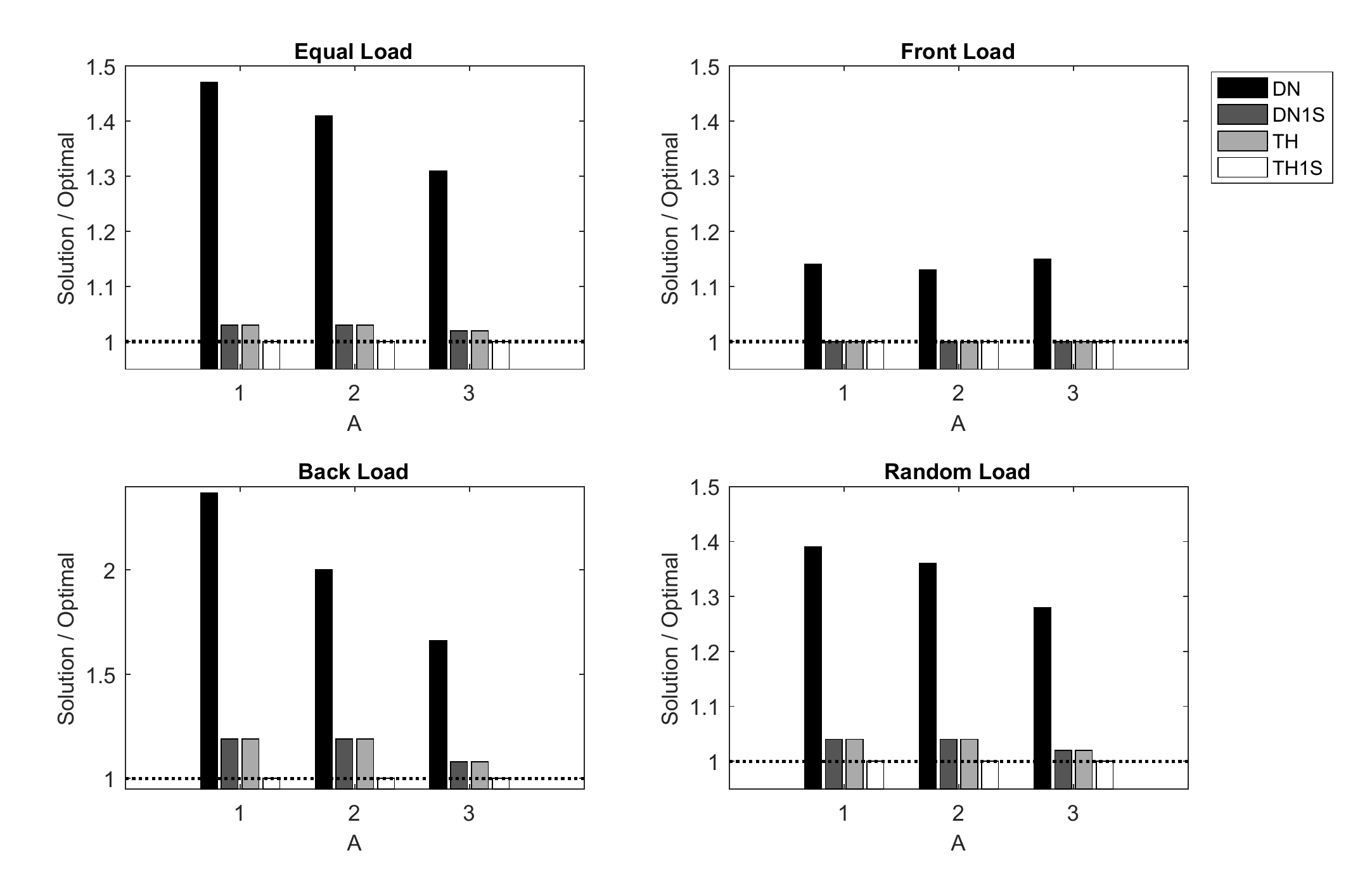}
\caption{Ratio between the heuristic cost and the optimal costs on $M$=1, $K$=4, $c_{e}$=5, $c_{o}$=20.}
\label{Fig:load}
\end{figure}

In Table \ref{Tab:ce10}, we present the results corresponding to a similar setting with only one difference; $c_{e}$ is increased from 5 to 10. Our TH1S heuristic again finds the optimal solution in all instances in much shorter time compared to the policy iteration algorithm. DN1S heuristic, which uses approximately the same computation time as TH1S, deviates by 5\% from the optimality on average. Comparing the results in Tables \ref{Tab:load} and \ref{Tab:ce10}, we see that DN performs better in Table \ref{Tab:ce10} on average. When $c_{e}$ increases, the optimal policy prescribes early service less frequently. Therefore, the solutions found by the DN heuristic, which never serves early, become less different than the optimal solutions. As a result, the average deviation from optimality for the DN policy decreases from 45\% to 22\% as $c_{e}$ increases. On the other hand, TH's performance becomes worse when the early service cost increases. Recall that TH is developed by extending the optimality conditions of a two-period system for longer planning horizons. 
Therefore, TH may consider some jobs that are due a future period to be more urgent than they actually are, and have a tendency to do more early service than the optimal solution. So, when $c_{e}$ increases, TH solutions deviate from the optimal solutions more. However, our TH1S heuristic still finds optimal solutions in reasonable computation times.

\begin{table}[H]
\caption{Computational results on $M$=1, $K$=4, $c_{e}$=10, $c_{o}$=20.}
\label{Tab:ce10}
\begin{center}
\scalefont{0.8}
\begin{tabular}{l|l|rr|rr|rr|rr| rr} 
\cmidrule[1pt]{3-12}                  
\multicolumn{2}{c|}{}&\multicolumn{2}{c|}{\textbf{Opt}}&\multicolumn{2}{c|}{\textbf{DN}}&\multicolumn{2}{c|}{\textbf{DN1S}}&\multicolumn{2}{c|}{\textbf{TH}}&\multicolumn{2}{c}{\textbf{TH1S}}\\\midrule
&A & Cost & t(sec)& Cost & t(sec)& Cost & t(sec)& Cost & t(sec)& Cost & t(sec) \\\midrule\midrule
\multirow{3}{3mm}{\begin{sideways}{EL}\end{sideways}}
&1 & 0.21 & 0.02   & 0.26 & 0.00 & \textbf{0.21} & 0.01      & 0.22          & 0.00      & \textbf{0.21} & 0.01   \\
&2 & 1.13 & 9.96   & 1.38 & 2.46          & 1.20          & 5.05      & 1.24          & 2.50      & \textbf{1.13} & 5.05   \\
&3 & 2.55 & 549.31 & 2.97 & 137.15        & 2.63          & 280.81    & 2.71          & 140.09    & \textbf{2.55}        & 280.42 \\
\hline
\multirow{3}{3mm}{\begin{sideways}{FL}\end{sideways}}
&1 & 0.19 & 0.02   & 0.21 & 0.00          & \textbf{0.19} & 0.01      & \textbf{0.19} & 0.00      & \textbf{0.19}         & 0.01   \\
&2 & 1.23 & 10.02  & 1.33 & 2.49          & 1.24          & 4.97      & 1.24          & 2.47      & \textbf{1.23}         & 4.98   \\
&3 & 2.77 & 549.55 & 2.95 & 137.22        & 2.78          & 274.73    & 2.79          & 139.28    & \textbf{2.77}          & 279.06 \\
\hline
\multirow{3}{3mm}{\begin{sideways}{BL}\end{sideways}}
&1 & 0.13 & 0.02   & 0.21 & 0.00          & 0.14          & 0.01      & 0.17          & 0.00      &\textbf{0.13}        & 0.01   \\
&2 & 0.95 & 9.97   & 1.33 & 2.47          & 1.12          & 5.06      & 1.27          & 2.47      &\textbf{0.95}        & 4.98   \\
&3 & 2.32 & 547.71 & 2.95 & 136.76        & 2.53          & 279.98    & 2.77          & 140.35    & \textbf{2.32}          & 280.65 \\
\hline
\multirow{3}{3mm}{\begin{sideways}{AL}\end{sideways}}
&1 & 0.21 & 0.02   & 0.26 & 0.00          & 0.23          & 0.01      & 0.23          & 0.00      &\textbf{ 0.21}         & 0.01   \\
&2 & 1.15 & 10.01  & 1.37 & 2.48          & 1.24          & 5.10      & 1.29          & 2.49      &\textbf{ 1.15}          & 5.01   \\
&3 & 2.59 & 552.62 & 2.97 & 137.56        & 2.71          & 281.03    & 2.79          & 136.61    & \textbf{2.59} & 268.79 \\ \hline\hline
\multicolumn{4}{c|}{\textbf{Average Deviation}} & 22\% & &5\% & &11\%& &0\% & \\
\hline\hline
\end{tabular}
\end{center}
\normalsize
\end{table}

The number of servers in the system also directly affects how the heuristics perform. As the capacity of the system increases, the number of jobs served in overtime decreases. If the capacity is indeed very large relative to the number of arrivals, then each service request can be served on its requested time. In such systems, DN and DN1S heuristics are expected to perform well since they do not consider early service. The computational results in Table \ref{Tab:Mlarge} present a case where the server capacity is abundant. The only parameter change from the previous scenario is increasing the number of servers to five. As it is seen in the table, DN, DN1S and TH1S find the optimal solution in all instances, while TH has a small deviation. These results support our expectations that the quality of the solutions found by DN and DN1S increase as the service capacity increases.

\newcommand{\NA}{---}

\begin{table}[H]
\caption{Computational results on $M$=5, $K$=4, $c_{e}$=10, $c_{o}$=20.}
\label{Tab:Mlarge}
\begin{center}
\scalefont{0.8}
\begin{tabular}{l|l|rr|rr|rr|rr| rr} 
\cmidrule[1pt]{3-12}                  
\multicolumn{2}{c|}{}&\multicolumn{2}{c|}{\textbf{Opt}}&\multicolumn{2}{c|}{\textbf{DN}}&\multicolumn{2}{c|}{\textbf{DN1S}}&\multicolumn{2}{c|}{\textbf{TH}}&\multicolumn{2}{c}{\textbf{TH1S}}\\\midrule
&A & Cost & t(sec)& Cost & t(sec)& Cost & t(sec)& Cost & t(sec)& Cost & t(sec) \\\midrule\midrule
\multirow{3}{3mm}{\begin{sideways}{EL}\end{sideways}}
& 1 & 0.00 & 0.00   & \textbf{0.00} & 0.00   & \textbf{0.00} & 0.00   & 0.09          & 0.00   & \textbf{0.00} & 0.01   \\
 & 2 & 0.00 & 5.38   & \textbf{0.00} & 2.39   & \textbf{0.00} & 5.29   & 0.04          & 2.48   & \textbf{0.00} & 5.27   \\
 & 3 & 0.00 & 428.78 & \textbf{0.00} & 138.01 & \textbf{0.00} & 277.94 & 0.01          & 137.52 & \textbf{0.00} & 279.05 \\ \hline
\multirow{3}{3mm}{\begin{sideways}{FL}\end{sideways}}
 & 1 & 0.00 & 0.00   & \textbf{0.00} & 0.00   & \textbf{0.00} & 0.00   & 0.02          & 0.00   & \textbf{0.00} & 0.01   \\
 & 2 & 0.00 & 5.60   & \textbf{0.00} & 2.47   & \textbf{0.00} & 5.23   & 0.01          & 2.48   & \textbf{0.00} & 5.27   \\
 & 3 & 0.00 & 422.82 & \textbf{0.00} & 136.86 & \textbf{0.00} & 279.99 & \textbf{0.00} & 136.85 & \textbf{0.00} & 277.96 \\\hline
\multirow{3}{3mm}{\begin{sideways}{BL}\end{sideways}}
 & 1 & 0.00 & 0.00   & \textbf{0.00} & 0.00   & \textbf{0.00} & 0.00   & 0.15          & 0.00   & \textbf{0.00} & 0.01   \\
 & 2 & 0.00 & 5.61   & \textbf{0.00} & 2.48   & \textbf{0.00} & 5.33   & 0.09          & 2.50   & \textbf{0.00} & 5.33   \\
 & 3 & 0.00 & 422.65 & \textbf{0.00} & 136.97 & \textbf{0.00} & 279.61 & 0.04          & 139.95 & \textbf{0.00} & 283.31 \\ \hline
\multirow{3}{3mm}{\begin{sideways}{AL}\end{sideways}}
& 1 & 0.00 & 0.00   & \textbf{0.00} & 0.00   & \textbf{0.00} & 0.00   & 0.08          & 0.00   & \textbf{0.00} & 0.01   \\
 & 2 & 0.00 & 5.73   & \textbf{0.00} & 2.54   & \textbf{0.00} & 5.34   & 0.04          & 2.47   & \textbf{0.00} & 5.25   \\
 & 3 & 0.00 & 433.93 & \textbf{0.00} & 140.55 & \textbf{0.00} & 278.91 & 0.01          & 134.49 & \textbf{0.00} & 269.88 \\
\hline\hline
\multicolumn{4}{c|}{\textbf{Average Deviation}} & 0\% & &0\% & &\NA& &0\% & \\
\hline\hline
\end{tabular}
\end{center}
\normalsize
\end{table}

In the computational experiments, the length of the planning horizon $K$ emerged as the most influential problem parameter on how the heuristics perform. The computational challenge imposed by longer planning horizon results in longer computation times for all solution methods and a larger deviation from the optimal solution for DN, DN1S and TH, while our TH1S heuristic still finds the optimal solution in all cases. To analyze the effect of the length of the planning horizon, we present the results in Tables \ref{Tab:K3ce10}, \ref{Tab:K5ce5} and \ref{Tab:K5ce10}. 

\begin{table}[H]
\caption{Computational results on $M$=1, $K$=3, $c_{e}$=10, $c_{o}$=20.}
\label{Tab:K3ce10}
\begin{center}
\scalefont{0.8}
\begin{tabular}{l|l|rr|rr|rr|rr| rr} 
\cmidrule[1pt]{3-12}                  
\multicolumn{2}{c|}{}&\multicolumn{2}{c|}{\textbf{Opt}}&\multicolumn{2}{c|}{\textbf{DN}}&\multicolumn{2}{c|}{\textbf{DN1S}}&\multicolumn{2}{c|}{\textbf{TH}}&\multicolumn{2}{c}{\textbf{TH1S}}\\\midrule
&A & Cost & t(sec)& Cost & t(sec)& Cost & t(sec)& Cost & t(sec)& Cost & t(sec) \\\midrule\midrule
\multirow{4}{3mm}{\begin{sideways}{EL}\end{sideways}}
&1&0.20  & 0.00    & 0.23  & 0.00    & \textbf{0.20}  & 0.00    & \textbf{0.20}  & 0.00    & \textbf{0.20}  & 0.00    \\
&2&1.16  & 0.01    & 1.36  & 0.00    & 1.19           & 0.00    & 1.19           & 0.00    & \textbf{1.16}  & 0.00    \\
&5&6.81  & 13.81   & 7.36  & 3.42    & 6.86           & 6.88    & 6.86           & 3.43    & \textbf{6.81}  & 6.87    \\
&10&22.09 & 4137.11 & 22.71 & 1035.06 & 22.20          & 2098.41 & 22.20          & 1039.98 & \textbf{22.09} & 2078.11 \\
\hline
\multirow{4}{3mm}{\begin{sideways}{FL}\end{sideways}}
&1&0.16  & 0.00    & 0.17  & 0.00    & \textbf{0.16}  & 0.00    & \textbf{0.16}  & 0.00    & \textbf{0.16}  & 0.00    \\
&2&1.23  & 0.01    & 1.30  & 0.00    & \textbf{1.23}  & 0.00    & 1.24           & 0.00    & \textbf{1.23}  & 0.00    \\
&5&7.16  & 13.93   & 7.35  & 3.45    & 7.17           & 6.83    & 7.20           & 3.43    & \textbf{7.16}  & 6.88    \\
&10&22.23 & 3109.00 & 22.71 & 1037.87 & \textbf{22.23} & 2073.43 & \textbf{22.23} & 1038.35 & \textbf{22.23} & 2073.04 \\
\hline
\multirow{4}{3mm}{\begin{sideways}{BL}\end{sideways}}
&1&0.12  & 0.00    & 0.17  & 0.00    & \textbf{0.12}  & 0.00    & \textbf{0.12}  & 0.00    & \textbf{0.12}  & 0.00    \\
&2&0.95  & 0.01    & 1.30  & 0.00    & 1.05           & 0.00    & 1.05           & 0.00    & \textbf{0.95}  & 0.00    \\
&5&6.46  & 13.60   & 7.35  & 3.38    & 6.59           & 6.78    & 6.59           & 3.51    & \textbf{6.46}  & 7.03    \\
&10&21.94 & 4235.91 & 22.71 & 1059.57 & 21.96          & 2072.61 & 21.98          & 1061.82 & \textbf{21.94} & 2123.15 \\
\hline
\multirow{4}{3mm}{\begin{sideways}{AL}\end{sideways}}
&1&0.20  & 0.00    & 0.22  & 0.00    & \textbf{0.20}  & 0.00    & \textbf{0.20}  & 0.00    & \textbf{0.20}  & 0.00    \\
&2&1.19  & 0.01    & 1.35  & 0.00    & 1.24           & 0.00    &1.24  & 0.00    & \textbf{1.19}  & 0.00    \\
&5&6.90  & 13.74   & 7.36  & 3.42    & 6.98           & 6.99    & 6.98           & 3.43    & \textbf{6.90}  & 6.87    \\
&10&22.11 & 3107.54 & 22.71 & 1037.73 & 22.12          & 2124.55 & 22.12          & 1038.37 & \textbf{22.11} & 2077.91 \\ \hline\hline
\multicolumn{4}{c|}{\textbf{Average Deviation}} & 12\% & &1\% & &1\%& &0\% & \\
\hline\hline
\end{tabular}
\end{center}
\normalsize
\end{table}


\begin{table}[H]
\caption{Computational results on $M$=1, $K$=5, $c_{e}$=5, $c_{o}$=20.}
\label{Tab:K5ce5}
\begin{center}
\scalefont{0.8}
\begin{tabular}{l|l|rr|rr|rr|rr| rr} 
\cmidrule[1pt]{3-12}                  
\multicolumn{2}{c|}{}&\multicolumn{2}{c|}{\textbf{Opt}}&\multicolumn{2}{c|}{\textbf{DN}}&\multicolumn{2}{c|}{\textbf{DN1S}}&\multicolumn{2}{c|}{\textbf{TH}}&\multicolumn{2}{c}{\textbf{TH1S}}\\\midrule
&A & Cost & t(sec)& Cost & t(sec)& Cost & t(sec)& Cost & t(sec)& Cost & t(sec) \\\midrule\midrule
\multirow{2}{3mm}{\begin{sideways}{EL}\end{sideways}}
& 1 & 0.18 & 3.49     & 0.28      & 1.12    & 0.19      & 2.27    & 0.19      & 1.12    & \textbf{0.18} & 2.28    \\
 & 2 & 0.92 & 9620.01  & 1.39      & 3203.87 & 1.00      & 6316.34 & 1.00      & 3219.64 & \textbf{0.92} & 6428.59 \\
\hline
\multirow{2}{3mm}{\begin{sideways}{FL}\end{sideways}}& 1 & 0.19 & 3.46     & 0.24      & 1.11    & 0.20      & 2.31    & 0.20      & 1.12    & \textbf{0.19} & 2.26    \\
 & 2 & 1.14 & 12842.10 & 1.36      & 3216.93 & 1.15      & 6443.10 & 1.15      & 3163.39 & \textbf{1.14} & 6338.47 \\\hline
\multirow{2}{3mm}{\begin{sideways}{BL}\end{sideways}} & 1 & 0.09 & 3.48     & 0.24      & 1.12    & 0.14      & 2.26    & 0.14      & 1.14    & \textbf{0.09} & 2.30    \\
 & 2 & 0.64 & 12739.88 & 1.36      & 3165.59 & 0.89      & 6323.69 & 0.89      & 3223.14 & \textbf{0.64} & 6327.60 \\\hline
\multirow{2}{3mm}{\begin{sideways}{AL}\end{sideways}}& 1 & 0.18 & 3.42     & 0.28      & 1.10    & 0.20      & 2.43    & 0.20      & 1.12    & \textbf{0.18} & 2.28    \\
 & 2 & 0.93 & 9673.41  & 1.39      & 3216.73 & 1.04      & 6998.82 & 1.04      & 3164.44 & \textbf{0.93} & 6296.38 \\
\hline\hline
\multicolumn{4}{c|}{\textbf{Average Deviation}} & 67\% & &17\% & &17\%& &0\% & \\
\hline\hline
\end{tabular}
\end{center}
\normalsize
\end{table}


\begin{table}[H]
\caption{Computational results on $M$=1, $K$=5, $c_{e}$=10, $c_{o}$=20.}
\label{Tab:K5ce10}
\begin{center}
\scalefont{0.8}
\begin{tabular}{l|l|rr|rr|rr|rr| rr} 
\cmidrule[1pt]{3-12}                  
\multicolumn{2}{c|}{}&\multicolumn{2}{c|}{\textbf{Opt}}&\multicolumn{2}{c|}{\textbf{DN}}&\multicolumn{2}{c|}{\textbf{DN1S}}&\multicolumn{2}{c|}{\textbf{TH}}&\multicolumn{2}{c}{\textbf{TH1S}}\\\midrule
&A & Cost & t(sec)& Cost & t(sec)& Cost & t(sec)& Cost & t(sec)& Cost & t(sec) \\\midrule\midrule
\multirow{2}{3mm}{\begin{sideways}{EL}\end{sideways}}
& 1 & 0.22 & 4.53     & 0.28      & 1.12    & \textbf{0.22}       & 2.27    & 0.26      & 1.13    & \textbf{0.22} & 2.28    \\
 & 2 & 1.11 & 12779.62 & 1.39      & 3204.71 & 1.21      & 6315.96 & 1.32      & 3164.02 & \textbf{1.11} & 6331.42 \\
\hline
\multirow{2}{3mm}{\begin{sideways}{FL}\end{sideways}}
 & 1 & 0.21 & 4.54     & 0.24      & 1.11    &\textbf{0.21}      & 2.27    & 0.21      & 1.13    & \textbf{0.21} & 2.31    \\
 & 2 & 1.22 & 12727.29 & 1.36      & 3157.63 & 1.24      & 6300.64 & 1.24      & 3188.95 & \textbf{1.22} & 6369.52 \\\hline
\multirow{2}{3mm}{\begin{sideways}{BL}\end{sideways}} 
 & 1 & 0.15 & 4.56     & 0.24      & 1.12    & 0.20      & 2.30    & 0.25      & 1.12    & \textbf{0.15} & 2.27    \\
 & 2 & 0.96 & 12622.16 & 1.36      & 3160.62 & 1.15      & 6454.27 & 1.57      & 3153.15 & \textbf{0.96} & 6312.36 \\\hline
\multirow{2}{3mm}{\begin{sideways}{AL}\end{sideways}}
& 1 & 0.22 & 4.56     & 0.28      & 1.12    & 0.25      & 2.30    & 0.27      & 1.12    & \textbf{0.22} & 2.27    \\
 & 2 & 1.12 & 12704.22 & 1.39      & 3157.67 & 1.24      & 6436.90 & 1.40      & 3155.02 & \textbf{1.12} & 6294.66 \\
\hline\hline
\multicolumn{4}{c|}{\textbf{Average Deviation}} & 30\% & &12\% & &28\%& &0\% & \\
\hline\hline
\end{tabular}
\end{center}
\normalsize
\end{table}

In Table \ref{Tab:K3ce10}, the results are presented for when there are only three periods in the planning horizon. Compared to the scenario when $K$=4 in Table \ref{Tab:ce10}, we see that all heuristics perform better when $K$=3. On the other hand, when there are five periods, the average quality of the solutions found by DN, DN1S and TH decreases significantly. This can be seen by comparing the corresponding results in Tables \ref{Tab:load} and \ref{Tab:K5ce5} and Tables \ref{Tab:K3ce10} and \ref{Tab:K5ce10}. These results are not very surprising for DN and TH policies since they do not make use of a policy improvement step. As $K$ increases, DN's myopic behavior that avoids early service and TH's greedy behavior to avoid overtime become more dominant. We also observe that even if DN1S performs a policy improvement, it cannot recover from the initial myopic policy much, yet our TH1S heuristic finds the optimal solution in all cases. 

In systems with shorter planning horizons as in Table \ref{Tab:K3ce10}, if the solution quality is of utmost importance, TH1S becomes the best heuristic to use. If small deviations from the optimality can be allowed for the sake of shorter computation time, then TH algorithm can be used. On the other hand, for larger planning horizons, DN, DN1S and TH heuristics are less competitive and TH1S remains as the only option which can find the optimal solution in all cases in a much shorter amount of time than the policy iteration.

\section{Conclusion}

In this paper, we study a capacity allocation problem in a queuing system where customers specify a desired service completion time. To our knowledge, this is the first study to analyze the structure of the optimal policy in such queuing systems and develop heuristic solutions that are based on the characterization of the optimal solution.

When customers have preferred service times, the customer satisfaction is measured by the service provider's ability to serve the jobs as close as possible to their requested times. Service can be completed either (1) early, (2) on time using the regular capacity or (3) on time by outsourcing or increasing the capacity temporarily. In the cases of (1) and (3), the service provider incurs early service cost and overtime cost respectively. The problem is to determine how to allocate the capacity, which may include idling servers strategically, to serve the jobs waiting in the queue at the minimum cost.

We formulate this problem as a Markov Decision Process (MDP). We show that the optimal policy is of threshold type for short planning horizons and characterize the optimal policy for small systems. Since the problem size increases exponentially in the length of the planning horizon, we develop heuristics that can find near-optimal solutions in reasonable computation times. Indeed, one of our proposed heuristics finds the optimal solution in almost all test instances.

Our results provide insights about how the optimal solution behaves in different scenarios. Certain combinations of problem parameters impose relatively easier scenarios to find good solutions. Such cases occur at the extreme realizations of the parameters that are directly affecting the cost of an action, i.e., early service cost, overtime cost and the server capacity. For example, when the ratio between the early service cost and the overtime cost is very large or small, optimal policy is similar to either \emph{never serve early} policy or \emph{serve early whenever possible} policy. However, when the ratio is moderate, a well-performing policy is harder to identify. A similar case is when the server capacity is either very scarce or abundant. In the first case, we rarely have remaining capacity after serving the requests that are due the current period, and we cannot consider the early service option. Therefore, the allocation decisions become very limited, which bounds the deviation from the optimal solution. When the capacity is large, almost all jobs can be served at their requested times without needing any overtime work. Moderate service capacity, however, is more difficult to deal with. On the other hand, the parameters that determine the size of the state space directly do not have such an effect. As the maximum number of arrivals that can be observed in a period and the length of the planning horizon increase, solving the problem becomes substantially more difficult and the benchmark heuristics are left with poor performance. Yet our proposed heuristic finds the optimal solution in almost all test instances.

The presented problem opens discussion to other interesting research questions. In this study, we assume that all customers are of the same type. Companies may assign a higher priority to their more loyal customers or to the ones with higher demand rates. Future work could focus on a variant of this problem for different customer types. Relaxing the assumption that each service can be completed within one period would be another extension. In addition, a similar problem can also be studied from the customer's perspective. For instance, when late service completions are allowed, if waiting in the queue is costly but increases the priority of a customer, the game theoretical problem of when to join the queue can be of interest.

\FloatBarrier
\bibliographystyle{ormsv080}
\bibliography{RevisedReferences}
\newpage 

\appendix

\section{Proofs of the Lemmas}
\label{Sec:Proof}

Proof of Lemma 1 is as follows.
\begin{proof}
	Suppose we have a function $f(x)$ that is increasing and componentwise convex. We first show that $T_o f(x)$ is also increasing. 
	
	Since the $f(x)$ is increasing and componentwise convex, we have
	\begin{align*}
	f(x)&\leq f(x+e_1+e_2)\\
	2f(x+e_i)&\leq f(x)+f(x+2e_i) \ \forall i\in\{1,2\}
	\end{align*}
	where $e_i$ is the unit vector where the  $i^{th}$ entry is one and all other entries are zero. Then, it follows that $T_o f(x)$ is increasing, i.e.,
	\begin{align}
	c_o(x_0-M)^++f(x)\leq c_o(x_0+1-M)^++f(x+e_1+e_2),
	\end{align}
	since $c_o(x_0-M)^+\leq c_o(x_0+1-M)^+$. 
	
	Similarly, $T_o f(x)$ is componentwise convex, i.e.,
	\begin{align}
	2f(x+e_1)+2c_o(x_0+1-M)^+\leq f(x)+c_o(x_0-M)^{+}+f(x+2e_1)+c_o(x_o+2-M)^+
	\end{align}
	and
	\begin{align}\label{trivial1}
	2f(x+e_2)+2c_o(x_0-M)^+\leq f(x)+c_o(x_0-M)^{+}+f(x+2e_2)+c_o(x_0-M)^+
	\end{align}
	since $2(x_0+1-M)^+\leq (x_0-M)^++(x_0+2-M)^+$ holds for all values of $x_0$. (The inequality \eqref{trivial1} follows trivially.)
	
	\end{proof}

Proof of Lemma 2 is given below.
\begin{proof}
We assume that $f(x)$ preserves the same properties as in Lemma 1. First, we will show that $T_{es} f(x)$ is increasing, i.e., $T_{es}f(x)\leq T_{es}f(x+e_1+e_2)$ where 

\begin{equation}
 T_{es} f(x)=\min_{y_1\in Y_{x^{1}}}\{c_e y_1+\sum_{a_0\in\cal{A}}\sum_{a_1\in\cal{A}}p_0(a_0)p_1(a_1)f(a_0+x_1-y_1,a_1)\},
 \label{Eq:first}
\end{equation}
\begin{equation}
\label{Eq:last}
 T_{es} f(x+e_1+e_2)=\min_{y_1\in Y_{x^{2}}}\{c_e y_1+\sum_{a_0\in\cal{A}}\sum_{a_1\in\cal{A}}p_0(a_0)p_1(a_1)f(a_0+x_1+1-y_1,a_1)\}.
\end{equation}

$Y_{x^1}=\{0,1,\dots, \min\{x_1, (M-x_0)^+\}\}$ and $Y_{x^2}=\{0,1,\dots, \min\{x_1+1, (M-x_0-1)^+\}\}$ are the decision sets for $y_1$. Note that for any given value $y_1$, the following holds since the function $f(x)$  is increasing:
\begin{align*}
c_ey_1+\sum_{a_0\in\cal{A}}\sum_{a_1\in\cal{A}}p_0(a_0)p_1(a_1)f(a_0+x_1-y_1, a_1)\leq c_ey_1+\sum_{a_0\in\cal{A}}\sum_{a_1\in\cal{A}}p_0(a_0)p_1(a_1)f(a_0+x_1+1-y_1, a_1),
\end{align*}

and thus it is easy to see that $T_{es}f(x)\leq T_{es}f(x+e_1+e_2)$ holds when $Y_{x^2}\subseteq Y_{x^1}$. Suppose that we have $Y_{x^1}\subset Y_{x^2}$. Then we must also have $\min\{x_1,(M-x_0)^+\}=x_1$ and $\min\{x_1+1, (M-x_0+1)^+\}=x_1+1$. Therefore, the set $Y_{x^2}$ differs from the set $Y_{x^1}$ by one element, and $Y_{x^2}$=$Y_{x^1}\cup \{x_1+1\}$. Let the minimizers for $T_{es}f(x)$ and $T_{es}f(x+e_1+e_2)$ be denoted by $y_1^{1*}$ and $y_1^{2*}$, respectively, and assume that $y_1^{2*}=x_1+1$. Based on the optimality of $y_1^{2*}=x_1+1$, we can write the following
\begin{dmath}
	c_e(x_1+1)+\sum_{a_0\in\cal{A}}\sum_{a_1\in\cal{A}}p_0(a_0)p_1(a_1)f(a_0+x_1+1-x_1-1,a_1) \leq c_e x_1+\sum_{a_0\in\cal{A}}\sum_{a_1\in\cal{A}}p_0(a_0)p_1(a_1)f(a_0+x_1+1-x_1, a_1),
\end{dmath}
and
\begin{dmath}
	c_e(x_1+1)+\sum_{a_0\in\cal{A}}\sum_{a_1\in\cal{A}}p_0(a_0)p_1(a_1)f(a_0,a_1) \leq c_e x_1+\sum_{a_0\in\cal{A}}\sum_{a_1\in\cal{A}}p_0(a_0)p_1(a_1)f(a_0+1, a_1).
\end{dmath}

Then, $y_1^{1*}=x_1$ since $f(x)$ is increasing and componentwise convex in $x_0$. As a result, we have 

\begin{equation}
T_{es}f(x)=c_ex_1+\sum_{a_0\in\cal{A}}\sum_{a_1\in\cal{A}}p_0(a_0)p_1(a_1) f(a_0, a_1)
\end{equation}

\begin{equation}
T_{es}f(x+e_1+e_2)=c_e(x_1+1)+\sum_{a_0\in\cal{A}}\sum_{a_1\in\cal{A}}p_0(a_0)p_1(a_1)f(a_0, a_1),
\end{equation}

and that $T_{es}f(x) \leq T_{es}f(x+e_1+e_2)$.

Now we will show that $T_{es}$ is componentwise convex. We will first prove componentwise convexity in $x_0$ which is expressed by the inequality below:

\begin{equation}
2T_{es}f(x+e_1)  \leq T_{es}f(x+2e_1) +T_{es}f(x) \label{Eq:comconvx0},
\end{equation}

where
\begin{equation}
 T_{es} f(x+e_1)=\min_{y_1\in Y_{x^{3}}}\{c_e y_1+\sum_{a_0\in\cal{A}}\sum_{a_1\in\cal{A}}p_0(a_0)p_1(a_1)f(a_0+x_1-y_1,a_1)\},
\end{equation}

\begin{equation}
 T_{es} f(x+2e_1)=\min_{y_1\in Y_{x^{4}}}\{c_e y_1+\sum_{a_0\in\cal{A}}\sum_{a_1\in\cal{A}}p_0(a_0)p_1(a_1)f(a_0+x_1-y_1,a_1)\},
\end{equation}

and $Y_{x^3}=\{0,1,\dots, \min\{x_1,(M-x_0-1)^+\}\}$, $Y_{x^4}=\{0,1,\dots, \min\{x_1, (M-x_0-2)^+\}\}$. Based on the service capacity, the following three cases can occur:

\begin{itemize}
\item[(i)] $0\leq x_1 \leq (M-x_0-2)^+$, and $Y_{x^4}=Y_{x^3}=Y_{x^1}$.
\item[(ii)] $x_1 = (M-x_0-1)^+$, and $Y_{x^4} \subset Y_{x^3}=Y_{x^1}$.
\item[(iii)] $ x_1 \geq(M-x_0)^+$ and $Y_{x^4}\subset Y_{x^3}\subset Y_{x^1}$.
\end{itemize}

Let the minimizer for $T_{es}f(x)$ be $y_1^{1*}$. Based on this, the following is true.

\begin{equation}
c_e \leq \sum_{a_0\in\cal{A}}\sum_{a_1\in\cal{A}}p_0(a_0)p_1(a_1)f(a_0+x_1-y_1^{1*}+1,a_1) - \sum_{a_0\in\cal{A}}\sum_{a_1\in\cal{A}}p_0(a_0)p_1(a_1)f(a_0+x_1-y_1^{1*},a_1)
\end{equation}

Since $f(x)$ is componentwise convex in $x_0$, if $y_1^{1*} \in Y_{x^3}$, then $y_1^{3*}=y_1^{1*}$; otherwise $y_1^{3*}=y_1^{1*}-1$ (notice that $ Y_{x^1}$ and $ Y_{x^3}$ decision sets can only differ by one element). Similarly, if $y_1^{3*} \in Y_{x^4}$, then $y_1^{4*}=y_1^{3*}$; otherwise $y_1^{4*}=y_1^{3*}-1$. So, we have either (a) $y_1^{1*}=y_1^{3*} =y_1^{4*}=y $, (b) $y_1^{1*}=y_1^{3*}=y_1^{4*}+1=y $, or (c) $y_1^{1*}=y_1^{3*}+1 =y_1^{4*}+2=y $.  

In case of (a), inequality (\ref{Eq:comconvx0})  holds at equality. In case of (b), by plugging the corresponding decisions in (\ref{Eq:comconvx0}), the inequality reduces to the following.

\begin{equation}
c_e \leq \sum_{a_0\in\cal{A}}\sum_{a_1\in\cal{A}}p_0(a_0)p_1(a_1)f(a_0+x_1-y+1,a_1) - \sum_{a_0\in\cal{A}}\sum_{a_1\in\cal{A}}p_0(a_0)p_1(a_1)f(a_0+x_1-y,a_1) \label{Eq:b}
\end{equation}

We know that inequality (\ref{Eq:b}) holds because $y_1^{1*}=y$ is the minimizer for $T_{es}(x)$ and provides a lower value than $y-1$. In case of (c), (\ref{Eq:comconvx0}) is equivalent to 

\begin{eqnarray}
2\sum_{a_0\in\cal{A}}\sum_{a_1\in\cal{A}}p_0(a_0)p_1(a_1)f(a_0+x_1-y+1,a_1) \leq \sum_{a_0\in\cal{A}}\sum_{a_1\in\cal{A}}p_0(a_0)p_1(a_1)f(a_0+x_1-y+2,a_1) \nonumber \\+\sum_{a_0\in\cal{A}}\sum_{a_1\in\cal{A}}p_0(a_0)p_1(a_1)f(a_0+x_1-y,a_1),
\end{eqnarray}

which holds due to componentwise convexity of $f(x)$ in $x_0$. 

To prove the componentwise convexity of $T_{es}$ in $x_1$, we need to show the following inequality holds:

\begin{equation}
2T_{es}f(x+e_2)  \leq T_{es}f(x+2e_2) +T_{es}f(x) \label{Eq:comconvx1},
\end{equation}

where
\begin{equation}
 T_{es} f(x+e_2)=\min_{y_1\in Y_{x^{5}}}\{c_e y_1+\sum_{a_0\in\cal{A}}\sum_{a_1\in\cal{A}}p_0(a_0)p_1(a_1)f(a_0+x_1+1-y_1,a_1)\},
\end{equation}

\begin{equation}
 T_{es} f(x+2e_2)=\min_{y_1\in Y_{x^{6}}}\{c_e y_1+\sum_{a_0\in\cal{A}}\sum_{a_1\in\cal{A}}p_0(a_0)p_1(a_1)f(a_0+x_1+2-y_1,a_1)\},
\end{equation}

and $Y_{x^5}=\{0,1,\dots, \min\{x_1+1,(M-x_0)^+\}\}$, $Y_{x^6}=\{0,1,\dots, \min\{x_1+2, (M-x_0)^+\}\}$. Based on the service capacity, the following three cases can occur:

\begin{itemize}
\item[(i)] $ (M-x_0)^+ \leq x_1$, and $Y_{x^1}=Y_{x^5}=Y_{x^6}$.
\item[(ii)] $(M-x_0)^+ = x_1+1$, and $Y_{x^1} \subset Y_{x^5}=Y_{x^6}$.
\item[(iii)] $(M-x_0)^+ \geq  x_1+2$ and $Y_{x^1} \subset Y_{x^5}\subset Y_{x^6}$.
\end{itemize}

Let the minimizer for $T_{es}f(x)$ be $y_1^{1*}$. Based on this, the following is true.

\begin{equation}
c_e \leq \sum_{a_0\in\cal{A}}\sum_{a_1\in\cal{A}}p_0(a_0)p_1(a_1)f(a_0+x_1-y_1^{1*}+1,a_1) - \sum_{a_0\in\cal{A}}\sum_{a_1\in\cal{A}}p_0(a_0)p_1(a_1)f(a_0+x_1-y_1^{1*},a_1) \label{Eq:x1Y1}
\end{equation}

Since $f(x)$ is componentwise convex in $x_0$, the following is also true for any $\eta$.
\begin{equation}
\eta c_e \leq \sum_{a_0\in\cal{A}}\sum_{a_1\in\cal{A}}p_0(a_0)p_1(a_1)f(a_0+x_1-y_1^{1*}+\eta,a_1) - \sum_{a_0\in\cal{A}}\sum_{a_1\in\cal{A}}p_0(a_0)p_1(a_1)f(a_0+x_1-y_1^{1*},a_1) \label{Eq:x1Y1}
\end{equation}
Thus, $y_1^{5*}$ is either equal to $y_1^{1*}$ or $y_1^{1*}+1$. Due to the same reasoning, $y_1^{6*}$ is either equal to $y_1^{5*}$ or $y_1^{5*}+1$. That is, we have either (a) $y_6^{1*}=y_1^{5*} =y_1^{1*}=y $, (b) $y_1^{6*}=y_1^{5*}=y_1^{1*}+1=y $, or (c) $y_1^{6*}=y_1^{5*}+1 =y_1^{1*}+2=y $.

In case of (a), (\ref{Eq:comconvx1}) holds because $f(x)$ is componentwise convex in $x_0$. In case of (b),  (\ref{Eq:comconvx1}) holds because $y_1^{5*}=y$ and it provides a lower value than $y-1$. In case of (c), (\ref{Eq:comconvx1}) holds at equality. Therefore, $T_{es}$ is componentwise convex in $x_1$.

\end{proof}

\section{Computing the Threshold Policy }

\label{App:heuristic}
\begin{algorithm}[H] 
\scalefont{0.8}
\caption{Single-Server Systems}\label{Alg:singleserver}
  \begin{algorithmic}[1]
  \State{current state: $\{x_{0}, x_{1}, \dots, x_{K-1}\}$}
  \State{$remaining\_capacity \gets M$} 
   \If {$x_{0} \geq remaining\_capacity$}  
   \State {$y_{0}$ $\gets$ $x_{0}$}
   \State {$y_{j}$ $\gets$ 0 for all $j\geq1$}
    \Else
    \State{$y_{0}\gets x_{0}$}
		\State{$remaining\_capacity$ $\gets$ $remaining\_capacity - x_{0}$}
    \For{$j:=0\textrm{ upto } =K-2$}  
      \State{$\theta \gets\frac{1+p_{j}(0)-p_{j}(0)p_{j}(1)-p_{j}^{2}(0)}{1-p_{j}^{2}(0)-p_{j}(0)p_{j}(1)}  $}
    \If{($\theta c_{e} \leq c_{o}$ \& $x_{j+1} >0)$}  
    \State{$y_{j+1}$ $\gets$ $\min\{remaining\_capacity,s_{j+1}  \}$}
    \Else if    { ($c_{e}$ $\leq$ $c_{o}$ $\leq$ $\theta c_{e}$ $\&$ $x_{j+1} >1$)}
    \State{$y_{j+1}\gets \min\{x_{j+1} -1,remaining\_capacity \} $}
    \EndIf  
    \State {$remaining\_capacity \gets remaining\_capacity -y_{j+1}$}   
    \EndFor   
\EndIf  
  \end{algorithmic}
\end{algorithm}

\begin{algorithm}[H] 
\scalefont{0.8}
\caption{Multiple-Server Systems}\label{Alg:singleserver}
  \begin{algorithmic}[1]
  \State {Create the state space assuming $K=2$}
 \For{$j:=0\textrm{ upto } =K-2$} 
  \State{old\_average\_cost $\gets$ $\infty$}
  \For{$s:=A\textrm{ downto } =0$} 
  \State{current\_average\_cost $\gets$ average cost for $(0, s)$ threshold using $p_{j}$ probabilities}
  \If{current\_average\_cost $>$ old\_average\_cost}
  \State{$threshold_{j}$ $\gets$ $s+1$}
  \State{Break}
  \Else
  \State{old\_average\_cost $\gets$ current\_average\_cost}
  \EndIf
  \EndFor
  \EndFor
  \Comment{We have determined the local optimal thresholds}

  \State{current state: $\{x_{0}, x_{1}, \dots, x_{K-1}\}$}
  \State{$remaining\_capacity \gets M$} 
   \If {$x_{0} \geq remaining\_capacity$}  
   \State {$y_{0}$ $\gets$ $x_{0}$}
   \State {$y_{j}$ $\gets$ 0 for all $j\geq1$}
    \Else
    \State{$y_{0}\gets x_{0}$}
		\State{$remaining\_capacity$ $\gets$ $remaining\_capacity - x_{0}$}
    \For{$j:=0\textrm{ upto } =K-2$}  
    \If{$x_{j+1} >threshold_{j}$}  
    \State{$y_{j+1}$ $\gets$ $\min\{x_{j+1} -threshold_{j},remaining\_capacity \}$}
    \Else
    \State{$y_{j+1}\gets 0 $}
    \EndIf  
    \State {$remaining\_capacity \gets remaining\_capacity -y_{j+1}$}   
    \EndFor   
\EndIf  
  \end{algorithmic}
\end{algorithm}

\end{document}